\documentclass[12pt, twoside, leqno]{article}

\usepackage[utf8]{inputenc}
\usepackage{amsmath,amsthm}
\usepackage{amssymb}
\usepackage{enumerate}
\usepackage{graphicx}
\usepackage{tikz}
\usetikzlibrary{matrix,arrows}

\pagestyle{myheadings}
\markboth{M. Korch and T. Weiss}{Perfectly null sets}

\newtheorem{thm}{Theorem}[section]
\newtheorem{cor}[thm]{Corollary}
\newtheorem{lem}[thm]{Lemma}
\newtheorem{prop}[thm]{Proposition}
\newtheorem{prob}[thm]{Problem}

\theoremstyle{definition}
\newtheorem{defin}[thm]{Definition}

\newtheorem*{xrem}{Remark}


\frenchspacing

\textwidth=13.5cm
\textheight=23cm
\parindent=16pt
\oddsidemargin=-0.5cm
\evensidemargin=-0.5cm
\topmargin=-0.5cm



\newcommand{\zbp}{\emptyset}
\newcommand{\Z}{\mathbb{Z}}
\newcommand{\vare}{\varepsilon}
\newcommand{\varp}{\varphi}
\newcommand{\diam}{\operatorname{diam}}
\newcommand{\oL}{{\mathcal{L}}}
\newcommand{\oS}{{\mathcal{S}}}
\newcommand{\cont}{{\mathfrak{c}}}
\newcommand{\Spl}{\operatorname{Split}}
\newcommand{\non}{\operatorname{non}}
\newcommand{\ran}{\operatorname{ran}}
\newcommand{\NN}{\boldsymbol{\text{N}}}
\newcommand{\PN}{\boldsymbol{\text{PN}}}
\newcommand{\PNp}{\boldsymbol{\text{PN}\,'}}
\newcommand{\PMp}{\boldsymbol{\text{PM}\,'}}
\newcommand{\bPN}{\boldsymbol{\text{bPN}}}
\newcommand{\uPN}{\boldsymbol{\text{uPN}}}
\newcommand{\vPN}{\boldsymbol{\text{vPN}}}
\newcommand{\bPNp}{\boldsymbol{\text{bPN}\,'}}
\newcommand{\uPNp}{\boldsymbol{\text{uPN}\,'}}
\newcommand{\vPNp}{\boldsymbol{\text{vPN}\,'}}
\newcommand{\UN}{\boldsymbol{\text{UN}}}
\newcommand{\SN}{\boldsymbol{\text{SN}}}

\newcommand{\PM}{\boldsymbol{\text{PM}}}
\newcommand{\UM}{\boldsymbol{\text{UM}}}
\newcommand{\SM}{\boldsymbol{\text{SM}}}
\newcommand{\obc}{\mathord{\upharpoonright}}

\begin{document}


\baselineskip=17pt


\title{On the class of perfectly null sets and its transitive version}
\author{Michał Korch\\
Institute of Mathematics\\ 
University of Warsaw\\
02-097 Warszawa, Poland\\
E-mail: m\_{}korch@mimuw.edu.pl 
\and
Tomasz Weiss\\
Institute of Mathematics\\ 
Cardinal Stefan Wyszyński University in Warsaw\\
01-938 Warszawa, Poland\\
E-mail: tomaszweiss@o2.pl}

\date{}

\maketitle


\renewcommand{\thefootnote}{}

\footnote{2010 \emph{Mathematics Subject Classification}: 03E05, 03E15, 03E35, 03E20.}

\footnote{\emph{Key words and phrases}: perfectly null sets, sets perfectly null in the transitive sense, special subsets of the real line, Cantor space.}

\renewcommand{\thefootnote}{\arabic{footnote}}
\setcounter{footnote}{0}


\bibliographystyle{siam}

\begin{abstract}
We introduce two new classes of special subsets of the real line: the class of perfectly null sets and the class of sets which are perfectly null in the transitive sense. These classes may play the role of duals to the corresponding classes on the category side. We investigate their properties and, in particular, we prove that every strongly null set is perfectly null in the transitive sense, and that it is consistent with ZFC that there exists a universally null set which is not perfectly null in the transitive sense. Finally, we state some open questions concerning the above classes. Although the main problem of whether the classes of perfectly null sets and universally null sets are consistently different remains open, we prove some results related to this question.
\end{abstract}

\section{Motivation and preliminaries}

Among classes of special subsets of the real line, the classes of perfectly meager sets (sets which are meager relative to any perfect set, here denoted by $\PM$) and universally null sets (sets which are null with respect to any possible finite diffused Borel measure, denoted by $\UN$) were considered to be dual (see \cite{am:ssrl}), though some differences between them have been observed. For example, the class of universally null sets is closed under taking products (see \cite{am:ssrl}), but it is consistent with ZFC that this is not the case for perfectly meager sets (see \cite{jp:ppmslf} and \cite{ir:ppms}). 

\begin{table}[h]
\centering
\begin{tabular}{c|ccccccc}
category & $\PM$ & $\supseteq$ & $\UM$ & $\supseteq$ & $\PMp$ & $\supseteq$ & $\SM$ \\ \hline
measure & \framebox{?} & & $\UN$ & & \framebox{?} & & $\SN$
\end{tabular}
\caption{Classes of special subsets of the real line.}
\label{classes}
\end{table}

In \cite{pz:ums}, P. Zakrzewski proved that two other earlier defined (see \cite{eg:afcs} and \cite{eg:spbfqcm}) classes of sets, and smaller then $\PM$, coincide and are dual to  $\UN$. Therefore, he proposed to call this class the universally meager sets (denoted by $\UM$). A set $A\subseteq 2^{\omega}$ is universally meager if every Borel isomorphic image of $A$ in $2^{\omega}$ is meager. The class $\PM$ was left without a counterpart (see Table~\ref{classes}), and in this paper (answering an oral question of P. Zakrzewski) we try to define a class of sets which may play the role of a dual class to $\PM$.

In the paper \cite{anmstw:assrnsmzs}, the authors introduced a notion of perfectly meager sets in the transitive sense (denoted here by $\PMp$), which turned out to be stronger than the classic notion of perfectly meager sets. In this article, we define an analogous class, which will be called the class of perfectly null sets in the transitive sense $\PNp$, and we investigate its properties. 

We assume that the reader is familiar with the standard terminology of special subsets of the reals, and we recall definitions that are less common (see also \cite{lb:srl}, \cite{tbhj:stsrl} and \cite{am:ssrl}).

Throughout this paper we will work generally in the Cantor space $2^{\omega}$. The basic clopen set in $2^{\omega}$ determined by a finite sequence $w\in 2^{<\omega}$ will be denoted by $[w]$. If $F$ is a set of partial functions $\omega\to 2$, the expression $[F]$ denotes $\bigcup_{f\in F}\{x\in 2^\omega\colon x\obc_{\ran f}=f\}$. The Cantor space will also be considered as a vector space over $\Z_{2}$. In particular, for $A,B\subseteq 2^{\omega}$, we let $A+B=\{t+s\colon t\in A, s\in B\}$.

Recall that a set $A$ is strongly null (strongly of measure zero) if for any sequence of positive $\delta_{n}>0$, there exists a sequence of open sets $\left<A_{n}\right>_{n\in\omega}$, with $\diam A_{n}<\vare_{n}$ for $n\in\omega$, where $\diam X$ denotes the diameter of a set $X$, and such that $A\subseteq\bigcup_{n\in\omega}A_{n}$. We denote the class of such sets by $\SN$. Galvin, Mycielski and Solovay (see \cite{fgjmrs:smzs}) proved that a set $A\in\SN$ (in $2^{\omega}$) if and only if for any meager set $B$, there exists $t\in 2^{\omega}$ such that $A\cap(B+t)=\zbp$. Therefore, one can consider a dual class of sets. A set $A$ is called strongly meager (strongly first category, denoted by $\SM$) if for any null set $B$, there exists $t\in 2^{\omega}$ such that $A\cap(B+t)=\zbp$. 

Finally, we shall say that an uncountable set $L\subseteq 2^{\omega}$ is a Lusin (respectively, Sierpiński) set if for any meager (respectively, null) set $X$, $L\cap X$ is countable.

\section{Perfectly null sets}

\subsection{Canonical measure on a perfect set}

If $P$ is a closed set in $2^{\omega}$, there is a pruned tree $T_{P}\subseteq 2^{<\omega}$ such that the set of all infinite branches of $T_{P}$ (usually denoted by $[T_{P}]$) equals $P$. If $T$ is a pruned tree, then $[T]$ is perfect if and only if for any $w\in T$, there exist $w',w''\in T$ such that $w\subseteq w', w\subseteq w''$, but $w'\nsubseteq w''$ and $w''\nsubseteq w'$. Such a tree is called a perfect tree.

If $w\in 2^n$, and $a,b\in\omega$, with $a\leq b$, then by $w[a,b]\in 2^{b-a+1}$ we denote a finite sequence such that $w[a,b](i)=w(a+i)$ for $i\leq b-a$. If $\left<s_0, s_1,\ldots s_k\right>$ is a finite sequence of natural numbers less than $n$, then $w\left<s_0, s_1,\ldots s_k\right>\in 2^{k+1}$ denotes a sequence such that $w\left<s_0, s_1,\ldots s_k\right>(i)=w(s_i)$ for any $i\leq k$. 

A finite sequence $w\in T_{P}$ will be called a branching point of a perfect set $P$ if $w^\frown 0,w^\frown 1\in T_{P}$. A branching point is on level $i\in\omega$ if there exist $i$ branching points below it. The set of all branching points of $P$ on level $i$ will be denoted by $\Spl_{i}(P)$ and $\Spl(P)=\bigcup_{i\in\omega} \Spl_{i}(P)$. Let $s_{i}(P)=\min\{|w|\colon w\in\Spl_{i}(P)\}$ and $S_{i}(P)=\max\{|w|\colon w\in\Spl_{i}(P)\}$. For $i>0$, we say that $w\in T_{P}$ is on level $i$ in $P$ (denoted by $l_P(w)=i$) if there exists $v,t\in T_P$ such that $v\subsetneq w\subseteq t$, $v\in\Spl_{i-1}(P), t\in\Spl_{i}(P)$. We say that $w\in T_{P}$ is on level $0$ if $w\subseteq t$ where $t\in \Spl_{0}(P)$.

Let $P$ be a perfect set in $2^{\omega}$ and $h_{P}\colon 2^{\omega}\to P$ be the homeomorphism given by the order isomorphism of $2^{<\omega}$ and $\Spl(P)$. We call this homeomorphism the canonical homeomorphism. Let $m$ denote the Lebesgue measure (the standard product measure) on $2^{\omega}$.

\begin{defin}
Let $A\subseteq P$ be such that $h_{P}^{-1}[A]$ is measurable in $2^{\omega}$. We define $\mu_{P}(A)=m(h_{P}^{-1}[A])$. 
Measure $\mu_{P}$ will be called \emph{the canonical measure on $P$}. A set $A\subseteq P$ such that $\mu_{P}(A)=0$ will be called $P$-null, a~set measurable with regard to $\mu_{P}$ will be called $P$-measurable. 
\end{defin}

The same idea of the canonical measure on a perfect set was used in \cite{mbam:mensmndcs}.

\begin{xrem}Sometimes measure $\mu_{P}$ will be considered as a measure on the whole $2^{\omega}$ by setting $\mu_{P}(A)=\mu_{P}(A\cap P)$ for $A\subseteq 2^{\omega}$ such that $A\cap P$ is $P$-measurable. 
\end{xrem}

For $w\in T_{P}$, we set $[w]_{P}=[w]\cap P$. Notice that if $w\in T_{P}$ is on level $i$ in $P$, then $\mu_{P}([w]_{P})=1/2^{i}$. If $Q\subseteq P$ is perfect, then $T_{Q}\subseteq T_{P}$, and therefore if $w\in T_{Q}$, then $l_Q(w)\leq l_P(w)$, so $\mu_{Q}([w]_{Q})\geq\mu_{P}([w]_{P})$. By defining the outer measure $\mu_{P}^{*}(A)=m^{*}(h_{P}^{-1}[A])$, where $m^{*}$ is the Lebesgue outer measure, we obtain the following proposition.

\begin{prop}
If $Q,P$ are perfect sets such that $Q\subseteq P$, and $A\subseteq Q$, then $\mu^{*}_{P}(A)\leq \mu^{*}_{Q}(A)$. In particular, every $Q$-null set $A\subseteq Q$ is also $P$-null.
\end{prop}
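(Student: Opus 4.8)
The plan is to reduce the stated inequality to the termwise comparison $\mu_P([w]_P)\le\mu_Q([w]_Q)$ for $w\in T_Q$ that is already recorded just before the proposition, by computing both outer measures through covers by the canonical clopen sets.

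First I would transport the usual clopen description of Lebesgue outer measure through the canonical homeomorphism. On $2^\omega$ one has $m^*(E)=\inf\{\sum_n m([v_n]):E\subseteq\bigcup_n[v_n],\ v_n\in 2^{<\omega}\}$, since every open set is a countable union of basic clopen sets. As $h_Q$ is the order isomorphism of $2^{<\omega}$ onto $\Spl(Q)$, it carries each $[v]$ onto a set of the form $[w]_Q$ with $w\in T_Q$ and $m([v])=\mu_Q([w]_Q)$, and these are exactly the clopen subsets of $Q$. Transporting the formula above through $h_Q$ (applied to $E=h_Q^{-1}[A]$) therefore gives
\[
\mu_Q^*(A)=\inf\Big\{\sum_{n}\mu_Q([w_n]_Q):A\subseteq\bigcup_n[w_n]_Q,\ w_n\in T_Q\Big\}.
\]

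Next I would promote any such $Q$-cover to a $P$-cover and compare term by term. Since $Q\subseteq P$ we have $T_Q\subseteq T_P$, so each $w_n$ lies in $T_P$ and $[w_n]_Q=[w_n]\cap Q\subseteq[w_n]\cap P=[w_n]_P$; hence $A\subseteq\bigcup_n[w_n]_P$. Applying the recorded inequality $\mu_P([w_n]_P)\le\mu_Q([w_n]_Q)$ to each term yields
\[
\mu_P^*(A)\le\sum_{n}\mu_P([w_n]_P)\le\sum_{n}\mu_Q([w_n]_Q),
\]
and taking the infimum over all $Q$-covers gives $\mu_P^*(A)\le\mu_Q^*(A)$. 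For the final assertion, if $A$ is $Q$-null then $\mu_Q^*(A)=m^*(h_Q^{-1}[A])=0$, so $\mu_P^*(A)=0$ by the inequality; since a subset of $2^\omega$ of Lebesgue outer measure zero is measurable with measure zero, $h_P^{-1}[A]$ is $m$-measurable with $m(h_P^{-1}[A])=0$, i.e.\ $A$ is $P$-null.

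I expect the only step needing genuine care to be the first one: justifying that $\mu_Q^*$ is computed by covers using exactly the canonical clopen sets $[w_n]_Q$ rather than arbitrary open subsets of $Q$, which is where the identification of $2^{<\omega}$ with $\Spl(Q)$ and the weight equality $m([v])=\mu_Q([w]_Q)$ are used. The remaining steps are a direct termwise estimate. An equivalent packaging would run the whole argument through the single continuous injection $g=h_P^{-1}\circ h_Q\colon 2^\omega\to 2^\omega$, noting that $g$ sends each basic clopen set $[v]$ into a basic clopen set of no larger measure and concluding $m^*(g[B])\le m^*(B)$ for $B=h_Q^{-1}[A]$; this rests on the same clopen-cover reduction.
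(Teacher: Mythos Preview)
Your proof is correct and is precisely what the paper leaves implicit: the paper gives only a $\square$, relying on the termwise inequality $\mu_P([w]_P)\le\mu_Q([w]_Q)$ for $w\in T_Q$ recorded in the sentence just before the proposition. You simply spell out the passage from this termwise estimate to the outer-measure inequality via covers by basic clopen sets, which is exactly the intended argument.
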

\hfill $\square$

\begin{prop}
If $Q,P$ are perfect sets such that $Q\subseteq P$, and $A$ is a $Q$-measurable subset of $Q$, then it is $P$-measurable.
\end{prop}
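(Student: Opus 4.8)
The plan is to lift a Borel ``sandwich'' for $A$ from the $Q$-side to the $P$-side, letting the previous proposition do the only genuinely measure-theoretic work. Since $A$ is $Q$-measurable, the set $h_{Q}^{-1}[A]$ is Lebesgue measurable in $2^{\omega}$, so, using the fact that the $\sigma$-algebra of $m$-measurable sets is the completion of the Borel $\sigma$-algebra, I would first fix Borel sets $B_{1},B_{2}\subseteq 2^{\omega}$ with $B_{1}\subseteq h_{Q}^{-1}[A]\subseteq B_{2}$ and $m(B_{2}\setminus B_{1})=0$.

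Next I would push this sandwich forward through the canonical homeomorphism $h_{Q}\colon 2^{\omega}\to Q$. Setting $C_{j}=h_{Q}[B_{j}]$ for $j=1,2$, injectivity of $h_{Q}$ and $A\subseteq Q=\ran h_{Q}$ give $C_{1}\subseteq A\subseteq C_{2}$, while each $C_{j}$ is a Borel subset of $Q$; since $Q$ is closed (hence Borel) in $2^{\omega}$, each $C_{j}$ is in fact Borel in the whole space $2^{\omega}$. Moreover $C_{2}\setminus C_{1}=h_{Q}[B_{2}\setminus B_{1}]$, so $\mu_{Q}(C_{2}\setminus C_{1})=m(B_{2}\setminus B_{1})=0$, i.e. $C_{2}\setminus C_{1}$ is a $Q$-null subset of $Q$. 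The key step is then to transfer this negligibility to $P$: by the preceding proposition every $Q$-null subset of $Q$ is $P$-null, so $\mu_{P}(C_{2}\setminus C_{1})=0$, which means $m\bigl(h_{P}^{-1}[C_{2}\setminus C_{1}]\bigr)=0$. Finally I would pull the sandwich back through $h_{P}$: applying $h_{P}^{-1}$ to $C_{1}\subseteq A\subseteq C_{2}$ yields $h_{P}^{-1}[C_{1}]\subseteq h_{P}^{-1}[A]\subseteq h_{P}^{-1}[C_{2}]$, where both outer sets are Borel in $2^{\omega}$ (preimages of Borel sets under the homeomorphism $h_{P}$) and their difference $h_{P}^{-1}[C_{2}]\setminus h_{P}^{-1}[C_{1}]=h_{P}^{-1}[C_{2}\setminus C_{1}]$ is $m$-null. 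Hence $h_{P}^{-1}[A]$ lies between two Borel sets differing by a Lebesgue-null set and is therefore $m$-measurable, which is exactly the assertion that $A$ is $P$-measurable.

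The argument is mostly bookkeeping about the maps $h_{P}$ and $h_{Q}$, and the single point I expect to require care is the claim that the images $C_{j}=h_{Q}[B_{j}]$ are Borel not merely in the subspace $Q$ but in the ambient space $2^{\omega}$; this is what makes their $h_{P}$-preimages Borel. That subtlety is resolved by $Q$ being closed in $2^{\omega}$, which ensures that the Borel subsets of $Q$ coincide with the Borel subsets of $2^{\omega}$ contained in $Q$. Once this is in place, the whole reduction goes through, with the preceding proposition supplying the one inequality between outer canonical measures that controls the error term.
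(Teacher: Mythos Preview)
Your proof is correct and follows essentially the same approach as the paper's: approximate $A$ by Borel sets up to a $Q$-null error, then invoke the preceding proposition to transfer that null error to $\mu_{P}$. The paper's argument is slightly more streamlined, using only a single Borel set $B$ with $B\cap Q\subseteq A$ and $\mu_{Q}(A\setminus B)=0$ rather than your two-sided sandwich, but the substance is identical.
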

\begin{proof} If $A$ is $Q$-measurable, there exists a Borel set $B\subseteq 2^{\omega}$ such that $B\cap Q\subseteq A$ and $\mu_{Q}(A\setminus B)=0$, so $\mu_{P}(A\setminus B)=0$. Let $B'=B\cap Q$. $B'$ is Borel, $\mu_{P}(A\setminus B')=\mu_{P}(A\setminus B)=0$ and $B'\subseteq A$. \end{proof}

\begin{cor}\label{mp-podperfect}
If $P$ is perfect, and $Q_{n}\subseteq P$ for $n\in\omega$ are perfect sets such that $\mu_{P}(\bigcup_{n} Q_{n})=1$ and $A\subseteq P$ is such that for any $n\in\omega$, $A\cap Q_{n}$ is $Q_{n}$-measurable, then $A$ is $P$-measurable and $\mu_{P}(A)\leq\sum_{n\in\omega}\mu_{Q_{n}}(A\cap Q_{n})$. In particular, if for all $n\in\omega$, $A\cap Q_{n}$ is $Q_{n}$-null, then $A$ is $P$-null.
\end{cor}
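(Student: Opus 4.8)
The plan is to reduce everything to the two preceding propositions together with the completeness of the Lebesgue measure. First I would observe that, since each $A\cap Q_{n}$ is $Q_{n}$-measurable and $Q_{n}\subseteq P$, the second proposition immediately yields that each $A\cap Q_{n}$ is $P$-measurable. As the $P$-measurable sets form a $\sigma$-algebra (being the preimages under $h_{P}$ of the Lebesgue measurable sets), the countable union $\bigcup_{n}(A\cap Q_{n})=A\cap\bigcup_{n}Q_{n}$ is $P$-measurable as well.

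Next I would deal with the part of $A$ lying outside $\bigcup_{n}Q_{n}$. Since $\mu_{P}(\bigcup_{n}Q_{n})=1=\mu_{P}(P)$, the set $P\setminus\bigcup_{n}Q_{n}$ is $P$-null, and hence so is its subset $A\setminus\bigcup_{n}Q_{n}$. The key point here --- and the only place where something beyond set algebra enters --- is that $\mu_{P}$ is complete: a subset of a $\mu_{P}$-null set is $P$-measurable, because $\mu_{P}$ is transported from the complete measure $m$ via the bijection $h_{P}$. Thus $A\setminus\bigcup_{n}Q_{n}$ is $P$-measurable, and writing $A=(A\cap\bigcup_{n}Q_{n})\cup(A\setminus\bigcup_{n}Q_{n})$ exhibits $A$ as a union of two $P$-measurable sets, so $A$ is $P$-measurable.

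For the inequality I would compute $\mu_{P}(A)=\mu_{P}(A\cap\bigcup_{n}Q_{n})$, the exceptional piece contributing nothing, and then apply countable subadditivity of $\mu_{P}$ to obtain $\mu_{P}(A)\leq\sum_{n}\mu_{P}(A\cap Q_{n})$. It then remains to replace each $\mu_{P}(A\cap Q_{n})$ by $\mu_{Q_{n}}(A\cap Q_{n})$; this is exactly the content of the first proposition, which gives $\mu^{*}_{P}(A\cap Q_{n})\leq\mu^{*}_{Q_{n}}(A\cap Q_{n})$, and since $A\cap Q_{n}$ is measurable in both spaces the outer measures coincide with the measures. Summing yields $\mu_{P}(A)\leq\sum_{n}\mu_{Q_{n}}(A\cap Q_{n})$, and the final ``in particular'' clause is immediate, as $\mu_{Q_{n}}(A\cap Q_{n})=0$ for every $n$ forces $\mu_{P}(A)=0$.

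I expect no serious obstacle: the statement is essentially a completeness-and-subadditivity bookkeeping argument resting on the two propositions. The one step that genuinely requires care is the measurability of the exceptional piece $A\setminus\bigcup_{n}Q_{n}$, where the completeness of the canonical measure must be invoked explicitly rather than taken for granted.
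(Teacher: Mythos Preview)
Your argument is correct and is precisely the intended one: the paper gives no proof at all for this corollary (it is marked with a bare $\square$), treating it as an immediate consequence of the two preceding propositions, and you have simply written out those details. The only point you single out for care --- completeness of $\mu_{P}$ to handle $A\setminus\bigcup_{n}Q_{n}$ --- is indeed the one nontrivial ingredient, and your justification via transport from the complete measure $m$ is fine.
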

\hfill $\square$

We will need the following lemma.

\begin{lem}\label{lem-measure-p-0}
Let $P\subseteq 2^\omega$ be a perfect set, $k\in \omega$ and $X\subseteq 2^{\omega}$ be such that for all $t\in P$, there exist infinitely many $n\in\omega$ such that there exists $w\in 2^k$ with $[t\obc_n\,^\frown w]_P\subseteq P\setminus X$. Then $\mu_P(X)=0$.
\end{lem}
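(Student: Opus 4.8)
The plan is to transport the whole situation to $2^\omega$ through the canonical homeomorphism $h_P$ and then apply the Lebesgue density theorem. Put $Y=h_P^{-1}[X]$; since $\mu_P^*(X)=m^*(h_P^{-1}[X])$, it suffices to prove $m^*(Y)=0$. The first task is to rewrite the hypothesis in $2^\omega$. Recall that if $u\in T_P$ is on level $i$ in $P$, then there is a branching point $v\in\Spl_i(P)$ with $u\subseteq v$ and no branching strictly between them, so $[u]_P=[v]_P$ and hence $h_P^{-1}[[u]_P]=[\sigma]$ for a single $\sigma\in 2^i$ with $m([\sigma])=\mu_P([u]_P)=1/2^i$. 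Applying this to $u=t\obc_n$ and to the extension $u^\frown w$ furnished by the hypothesis yields basic clopen sets $[\sigma]\supseteq[\sigma']$ with $h_P^{-1}[[t\obc_n{}^\frown w]_P]=[\sigma']$ and $[\sigma']\cap Y=\zbp$. The crucial quantitative point is that passing from $u$ to $u^\frown w$ appends only $k$ coordinates, hence crosses at most $k$ branching levels, so $l_P(u^\frown w)\le l_P(u)+k$ and therefore $m([\sigma'])\ge 2^{-k}m([\sigma])$. One caveat must be recorded: the hypothesis has to be read as supplying a \emph{nonempty} piece $[t\obc_n{}^\frown w]_P$, since an empty piece is trivially contained in $P\setminus X$ and carries no information (indeed, with the empty reading the statement fails, e.g. for $X=P$ on a suitable $P$).

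Now fix $s\in 2^\omega$ and set $t=h_P(s)$. Because $P$ is perfect, $l_P(t\obc_n)\to\infty$, and the good values of $n$ are unbounded, so the reformulation gives arbitrarily large $i$ together with a basic clopen set $C_i\subseteq[s\obc_i]$ satisfying $C_i\cap Y=\zbp$ and $m(C_i)\ge 2^{-k}m([s\obc_i])$. Take a measurable envelope $\tilde Y\supseteq Y$, so that $m(\tilde Y\cap M)=m^*(Y\cap M)$ for every measurable $M$. Applying this with $M=[s\obc_i]$ and using $Y\cap[s\obc_i]\subseteq[s\obc_i]\setminus C_i$ gives
$$m(\tilde Y\cap[s\obc_i])=m^*(Y\cap[s\obc_i])\le m([s\obc_i])-m(C_i)\le(1-2^{-k})\,m([s\obc_i]).$$
As this holds for arbitrarily large $i$, the lower density of $\tilde Y$ along $\langle[s\obc_i]\rangle_i$ is at most $1-2^{-k}<1$, so \emph{no} point $s\in 2^\omega$ is a Lebesgue density point of $\tilde Y$. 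By the Lebesgue density theorem for the dyadic filtration in $2^\omega$, almost every point of $\tilde Y$ is a density point of $\tilde Y$; hence $m(\tilde Y)=0$, whence $m^*(Y)=0$ and $\mu_P(X)=0$.

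The density theorem and the envelope identity are standard, so the real work lies in the tree bookkeeping of the first paragraph: checking that $[u]_P$ and $[u^\frown w]_P$ are carried by $h_P$ to genuine basic clopen sets and, above all, establishing the uniform ratio bound $m([\sigma'])\ge 2^{-k}m([\sigma])$ from the fact that $k$ extra coordinates cross at most $k$ branching levels. This uniformity in $k$ — independent of $t$, $n$, and the local shape of $P$ near $u$ — is exactly what drives the density estimate, so getting the level count $l_P(u^\frown w)\le l_P(u)+k$ right is the step I expect to require the most care.
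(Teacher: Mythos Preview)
Your argument is correct and takes a genuinely different route from the paper's. The paper gives an elementary covering construction: starting from $S_0=\{\zbp\}$, it inductively refines each $[s]_P$ (via compactness) into finitely many disjoint pieces $[s_{s,t}]_P$, applies the hypothesis at each piece to delete one $2^k$-child, and keeps the remaining ones to form $S_{m+1}\subseteq T_P$; the key estimate $\sum_{w\neq w_{s,t}}2^{-l_P(s_{s,t}{}^\frown w)}\le\frac{2^k-1}{2^k}\cdot 2^{-l_P(s_{s,t})}$ then yields $\sum_{s\in S_m}2^{-l_P(s)}\le\bigl(\frac{2^k-1}{2^k}\bigr)^m$. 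The quantitative heart of both arguments is the same level bound $l_P(u^\frown w)\le l_P(u)+k$ that you isolate; the difference is packaging. Their route is fully self-contained and avoids any differentiation theorem, while yours is shorter and more conceptual once the density theorem is granted, and the transport through $h_P$ makes transparent why only the number $k$ of appended coordinates matters rather than the local geometry of $P$.

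Your caveat about reading the hypothesis as supplying a \emph{nonempty} $[t\obc_n{}^\frown w]_P$ is well taken and applies equally to the paper's proof: their displayed inequality above needs $s_{s,t}{}^\frown w_{s,t}\in T_P$ to actually subtract a term of size at least $2^{-k}\cdot 2^{-l_P(s_{s,t})}$. In every application of the lemma in the paper the witness $w$ is chosen so that $t\obc_n{}^\frown w\in T_P$, so this is a matter of stating the hypothesis precisely rather than a defect in either proof.
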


\begin{proof}
Notice that if $k=0$, then $X\cap P=\zbp$, so we can assume that $k>0$. We prove by induction that for any $m\in\omega$, there exists a finite set $S_{m}\subseteq T_P$ such that $X\cap P\subseteq \bigcup_{s\in S_m}[s]_P$, and 
\[\sum_{s\in S_m}\dfrac{1}{2^{l_P(s)}}\leq \left(\dfrac{2^k-1}{2^k}\right)^m.\]

Let $S_0=\{\zbp\}$. Given $S_m$, for each $s\in S_m$ and each $t\in P$ such that $s\subseteq t$, we can find $s_{s,t}\in T_P$ such that $s\subseteq s_{s,t}\subseteq t$ and $w_{s,t}\in 2^k$ with $[s_{s,t}\,^\frown w_{s,t}]_P\subseteq P\setminus X$. Therefore, since $[s]_P$ is compact, we can find a finite set $A_s\subseteq P$ such that $[s]_P=\bigcup_{t\in A_s}[s_{s,t}]_P$ and $[s_{s,t}]_P\cap [s_{s,t'}]_P=\zbp$ if $t,t'\in A_s$ and $t\neq t'$. Let 
\[S_{m+1}=\left\{s_{s,t}\,^\frown w\colon s\in S_m\land t\in A_s\land w\in 2^k\setminus \{w_{s,t}\}\right\}\cap T_P.\]

We have that $X\cap P\subseteq \bigcup_{s\in S_{m+1}}[s]_P$. Notice also that for $s\in S_m$, 
 \[\sum_{t\in A_s} \frac{1}{2^{l_P(s_{s,t})}}=\frac{1}{2^{l_P(s)}}.\]

  Moreover, if $t\in A_s$, then 
  \[\sum_{w\in 2^k\setminus \{w_{s,t}\}} \frac{1}{2^{l_P(s_{s,t}\,^\frown w)}}\leq \frac{2^k-1}{2^k}\cdot\frac{1}{2^{l_P(s_{s,t})}}.\]
   Therefore,
  \[\sum_{s\in S_{m+1}}\frac{1}{2^{l_P(s)}}\leq \dfrac{2^k-1}{2^k}\cdot\sum_{s\in S_m}\frac{1}{2^{l_P(s)}}\leq \left(\dfrac{2^k-1}{2^k}\right)^{m+1},\]

   which concludes the induction argument.

Thus, $\mu_{P}(X)\leq \left(1-1/2^k\right)^m$ for any $m\in\omega$, and so $\mu_P(X)=0$.
\end{proof}

Now, we define a possible measure analogue of the class of perfectly meager sets. 

\begin{defin}
We shall say that $A\subseteq 2^{\omega}$ is \emph{perfectly null} if it is null in any perfect set $P\subseteq 2^{\omega}$ with respect to measure $\mu_{P}$. The class of perfectly null sets will be denoted by $\PN$.
\end{defin}

\begin{prop}\label{prop-pn}
The following conditions are equivalent for a set $A\subseteq 2^{\omega}$:
\begin{enumerate}[\upshape (i)]
\item $A$ is perfectly null,
\item for every perfect $P\subseteq 2^{\omega}$, $A\cap P$ is $P$-measurable, but $P\setminus A\neq\zbp$,
\item there exists $n\in\omega$ such that for every $w\in 2^{n}$ and every perfect $P\subseteq [w]$, $A\cap P$ is $P$-null.
\end{enumerate}
\end{prop}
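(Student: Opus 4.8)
The plan is to show that (i), (ii) and (iii) are equivalent by proving $(i)\Rightarrow(iii)$, $(iii)\Rightarrow(i)$, $(i)\Rightarrow(ii)$ and $(ii)\Rightarrow(i)$, so that (i) serves as the hub. Two of these are essentially immediate. For $(i)\Rightarrow(iii)$ I would simply take $n=0$: the only $w\in 2^{0}$ is the empty sequence, $[w]=2^{\omega}$, and the requirement in (iii) reduces verbatim to the definition of perfect nullity. For $(i)\Rightarrow(ii)$, a $P$-null set is in particular $P$-measurable, and since $\mu_{P}(P)=1$ while $\mu_{P}(A\cap P)=0$, the set $P\setminus A$ is co-null, hence nonempty.

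The first genuine step is $(iii)\Rightarrow(i)$. I would fix the witness $n$ supplied by (iii) and an arbitrary perfect $P\subseteq 2^{\omega}$. The key observation is that for each $w\in 2^{n}$ the set $P\cap[w]$, when nonempty, is perfect: it is closed as the intersection of a closed and a clopen set, and since $[w]$ is open, any point of $P\cap[w]$ is a limit of points of $P$ that eventually lie in $[w]$, so $P\cap[w]$ has no isolated points. These finitely many perfect pieces are subsets of $P$ whose union is all of $P$, so $\mu_{P}$ of their union equals $1$. By (iii), each $A\cap(P\cap[w])$ is $(P\cap[w])$-null, and Corollary~\ref{mp-podperfect} (applied to this finite, hence countable, family) then gives that $A\cap P$ is $P$-null. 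As $P$ was arbitrary, $A\in\PN$.

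The remaining and, I expect, most delicate step is $(ii)\Rightarrow(i)$, which I would argue by contraposition. Suppose (ii) holds but $A\notin\PN$. By the measurability clause of (ii), $A\cap P$ is $P$-measurable for every perfect $P$, so the failure of perfect nullity yields a perfect $P$ with $\mu_{P}(A\cap P)>0$. Transporting through the canonical homeomorphism, $h_{P}^{-1}[A\cap P]$ is Lebesgue measurable of positive measure in $2^{\omega}$; by inner regularity it contains a closed set of positive measure, which is uncountable and hence contains a perfect set $C$. Then $Q=h_{P}[C]$ is a perfect subset of $2^{\omega}$ (a homeomorphic image of a perfect compact set is perfect) with $Q\subseteq A$, so $Q\setminus A=\emptyset$, contradicting (ii). The crux here is manufacturing a genuine perfect subset of $A$ out of mere positive measure, and the passage to Lebesgue measure via $h_{P}$ together with standard inner regularity is exactly what makes this possible; the only point requiring care is checking that the image $h_{P}[C]$ is again perfect in $2^{\omega}$, which follows since $h_{P}$ is a homeomorphism onto the closed set $P$.
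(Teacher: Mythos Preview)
Your proof is correct and follows essentially the same route as the paper: the paper's terse argument likewise reduces $(ii)\Rightarrow(i)$ to extracting a closed uncountable (hence perfect-containing) $F\subseteq A\cap P$ from positive $\mu_P$-measure, and handles $(iii)\Rightarrow(i)$ by observing that each nonempty $[w]_P$ is perfect and that these finitely many pieces exhaust $P$. Your write-up is more explicit---in particular you spell out the appeal to Corollary~\ref{mp-podperfect} and the passage through $h_P$ and Lebesgue inner regularity---but the underlying ideas coincide.
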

\begin{proof} Notice that if $A\cap P$ is $P$-measurable with $\mu_{P}(A\cap P)>0$, then we can find a closed uncountable set $F$ such that $F\subseteq A\cap P$. Therefore, there is a perfect set $Q\subseteq F$ and $Q\subseteq A$, so $Q\setminus A=\zbp$. Moreover, given any perfect set $P$ we have $P=\bigcup_{w\in 2^{n},w\in T_{P}} [w]_{P}$, and for any $w\in 2^{n}$ such that $w\in T_{P}$, the set $[w]_{P}$ is perfect. \end{proof}

\vspace{0.1cm}

\subsection{The main open problem}  

\begin{prop}\label{un-is-pn}
$\UN\subseteq \PN$.
\end{prop}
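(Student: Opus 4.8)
The claim is $\UN \subseteq \PN$: every universally null set is perfectly null. Recall $A$ is universally null if it is null with respect to every finite diffused (atomless) Borel measure on $2^\omega$, and $A$ is perfectly null if $\mu_P(A \cap P) = 0$ for every perfect $P \subseteq 2^\omega$.

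\medskip

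The plan is to exploit the fact that each canonical measure $\mu_P$ is, after transport, just the standard product measure. Fix a perfect set $P \subseteq 2^\omega$ and let $h_P \colon 2^\omega \to P$ be the canonical homeomorphism from the order isomorphism of $2^{<\omega}$ with $\Spl(P)$. I would push $\mu_P$ forward to a Borel measure $\nu$ on all of $2^\omega$ by setting $\nu(B) = m(h_P^{-1}[B \cap P])$ for Borel $B$; equivalently $\nu$ is the image of the Lebesgue measure $m$ under the embedding $h_P$, viewed as a measure on $2^\omega$ concentrated on $P$. Since $h_P$ is a homeomorphism onto $P$ and $m$ is a finite Borel measure, $\nu$ is a finite Borel measure on $2^\omega$. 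The key point is then to verify that $\nu$ is \emph{diffused}, i.e. has no atoms: for any singleton $\{x\}$, if $x \notin P$ then $\nu(\{x\}) = 0$ trivially, and if $x \in P$ then $\nu(\{x\}) = m(\{h_P^{-1}(x)\}) = 0$ because $m$ is atomless on $2^\omega$. Hence $\nu$ is a finite diffused Borel measure.

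\medskip

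With $\nu$ in hand, the conclusion is immediate. Since $A \in \UN$, we have $\nu(A) = 0$. But by construction
\[
\nu(A) = m\bigl(h_P^{-1}[A \cap P]\bigr) = \mu_P(A \cap P),
\]
so $\mu_P(A \cap P) = 0$, which is exactly the statement that $A$ is $P$-null. As $P$ was an arbitrary perfect set, $A \in \PN$. Strictly, to invoke the definition of $\PN$ one should also check that $A \cap P$ is $\mu_P$-measurable; this follows because $A$ being $\nu$-null (in particular $\nu$-measurable) means $h_P^{-1}[A \cap P]$ is $m$-measurable of measure zero, which is precisely $P$-measurability of $A \cap P$ with $\mu_P$-measure $0$.

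\medskip

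I expect the only delicate step to be the verification that $\nu$ is diffused, since the whole argument hinges on $\mu_P$ genuinely qualifying as one of the measures witnessing universal nullity. This is where the hypothesis that $P$ is \emph{perfect} (so that $h_P$ is a homeomorphism onto a set with no isolated points, and $m$ is atomless) is used; without atomlessness of the transported measure the inclusion would fail. Everything else is a routine transport-of-measure computation, so no lemma from the preceding material beyond the definition of $\mu_P$ and of $\UN$ is actually required.
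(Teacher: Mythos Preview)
Your proof is correct and follows essentially the same route as the paper: both extend $\mu_P$ to a diffused Borel measure on all of $2^\omega$ by setting $\lambda(B)=\mu_P(B\cap P)$ (your $\nu$) and then invoke the definition of $\UN$. You simply supply more detail, in particular the explicit verification of atomlessness, which the paper leaves implicit.
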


\begin{proof} Let $A\subseteq 2^{\omega}$ be universally null, and let $P$ be perfect. Let $\lambda$ be a~measure on $2^{\omega}$ such that $\lambda(B)=\mu_{P}(B\cap P)$ for any Borel set $B\subseteq 2^\omega$. Then $\lambda(A)=0$, so $A$ is $P$-null. \end{proof}

Unfortunately, we do not know the answer to the following question.

\begin{prob} \label{cru-pro}
Is it consistent with ZFC that $\UN\neq\PN$?
\end{prob}

\begin{xrem}
On the category side every proof of the consistency of the fact that $\UM\neq \PM$ known to the authors uses the idea of the Lusin function or similar arguments. The Lusin function $\oL\colon \omega^{\omega}\to 2^{\omega}$ was defined in \cite{nl:etpc}, and extensively described in \cite{ws:hc}. To get the Lusin function we construct a system $\left<P_{s}\colon s\in \omega^{<\omega}\right>$ of perfect sets, such that for $s\in\omega^{<\omega}$ and $n,m\in\omega$, $\diam P_{s}\leq 1/2^{|s|}$, $P_{s^\frown n}\subseteq P_{s}$ is nowhere dense in $P_{s}$, $\bigcup_{k\in\omega}P_{s^\frown k}$ is dense in $P_{s}$, and if $n\neq m$, then $P_{s^\frown n}\cap P_{s^\frown m}=\zbp$. Next, we set $\oL(x)$ to be the only point of $\bigcap_{n\in\omega} P_{x|_n}$. One can prove that $\oL$ is a continuous and one-to-one function. Furthermore, if $Q\subseteq 2^{\omega}$ is a perfect set, then $\oL^{-1}[\bigcup\{P_{s}\colon P_{s}\text{ is nowhere dense in }Q\}]$ contains an open dense set. Therefore, if $L$ is a Lusin set, then $\oL[L]$ is perfectly meager (see also \cite{am:ssrl}). Moreover, $\oL^{-1}$ is a function of the first Baire class. Given such a function it easy to see that if there exists a Lusin set $L$, then $\UM\neq\PM$. This should be clear since $\UM$ is a class closed under taking Borel isomorphic images, so $\oL[L]\in \PM\setminus\UM$.
\end{xrem}

Therefore, to prove $\PN\neq \UN$, we possibly need some analogue of the Lusin function. 

\begin{prob}
Is there an analogue of the Lusin function for perfectly null sets?
\end{prob}

But even if such an analogue exists, it cannot be constructed in a similar way to the Lusin's argument. Indeed, if we equip $\omega^\omega$ with the natural measure $m$ which is defined by the following formula
\[m([w])=\prod_{i=0}^{|w|-1}\frac{1}{2^{w(i)+1}},\]
where $w\in\omega^{<\omega}$ and $[w]=\{f\in\omega^\omega\colon w\subseteq f\}$, we get the following proposition.

\begin{prop}
Let $\oS\colon \omega^\omega\to 2^\omega$ be a function such that there exists a sequence $\left<P_s\colon s\in \omega^{<\omega}\right>$  such that for $s\in \omega^{<\omega}$, $P_{s}\subseteq 2^\omega$ is a perfect set, and for $n,m\in\omega$, $n\neq m \rightarrow P_{s^\frown n}\cap P_{s^\frown m}=\zbp$, $P_{s^\frown n}\subseteq P_{s}$,  $\diam P_{s}\leq 1/2^{|s|}$, and  $\oS(x)$ is the only element of $\bigcap_{n\in\omega} P_{x\obc_n}$. Then there exists a perfect set $Q\subseteq 2^\omega$ such that $m(\oS^{-1}[\bigcup\{P_{s}\colon \mu_{Q}(P_{s})=0\}])<1$.
\end{prop}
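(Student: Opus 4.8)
The plan is to exhibit a single perfect set $Q$ that "captures with positive measure" a fat tree of nodes $s$, so that the $\oS$-preimage of the union of the $\mu_Q$-null pieces misses a set of positive $m$-measure (this is exactly where the measure analogue of Lusin's argument breaks down). First I would record two elementary facts about $\oS$. Since $\diam P_s\leq 2^{-|s|}$, the map $\oS$ is continuous, and since $P_{u^\frown n}\cap P_{u^\frown m}=\zbp$ for $n\neq m$, an easy induction shows that for each fixed $k$ the sets $\{P_s:|s|=k\}$ are pairwise disjoint; consequently, for $|s|=k$ one has $\oS(x)\in P_s$ if and only if $s=x\obc_k$. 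In particular $\oS$ is injective, and therefore
\[\oS^{-1}\Big[\bigcup\{P_s : \mu_Q(P_s)=0\}\Big]=\{x\in\omega^\omega : \exists k\ \mu_Q(P_{x\obc_k})=0\}.\]
Writing $T_Q=\{s\in\omega^{<\omega} : \mu_Q(P_s)>0\}$ (a tree, since $P_{s^\frown n}\subseteq P_s$ makes $\mu_Q$ monotone along $\subseteq$), the displayed set is precisely $\omega^\omega\setminus[T_Q]$. As $m$ is a probability measure, the whole claim reduces to finding a perfect $Q$ with $m([T_Q])>0$.

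Next I would construct $Q$. Fix the finitely branching subtree $T\subseteq\omega^{<\omega}$ obtained by keeping, at each node $s$, the children $\{0,1,\dots,|s|+1\}$. Then every node of $T$ branches, so $[T]=\prod_{k}\{0,\dots,k+1\}$ is a perfect compact subset of $\omega^\omega$, and
\[m([T])=\prod_{k\in\omega}\Big(1-\tfrac{1}{2^{k+2}}\Big)>0,\]
because $\sum_k 2^{-(k+2)}<\infty$. Since $\oS$ is a continuous injection and $[T]$ is compact, $\oS\obc_{[T]}$ is a homeomorphism onto its image, so $Q:=\oS[[T]]$ is a perfect subset of $2^\omega$.

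It then remains to check $T\subseteq T_Q$, that is, $\mu_Q(P_s)>0$ for every $s\in T$. By the equivalence from the first paragraph, $P_s\cap Q=\oS[[T]\cap[s]]$, and for $s\in T$ the set $[T]\cap[s]$ is a nonempty clopen subset of $[T]$; hence $P_s\cap Q$ is a nonempty relatively clopen subset of $Q$. Consequently $h_Q^{-1}[P_s\cap Q]$ is a nonempty open subset of $2^\omega$, and every nonempty open set has positive Lebesgue measure, so $\mu_Q(P_s)=m(h_Q^{-1}[P_s\cap Q])>0$. Thus $[T]\subseteq[T_Q]$ and $m([T_Q])\geq m([T])>0$, whence $m(\oS^{-1}[\bigcup\{P_s:\mu_Q(P_s)=0\}])=1-m([T_Q])<1$. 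I expect the only delicate point to be this last positivity step — that a nonempty relatively clopen piece of a perfect set always carries positive canonical measure — and it is exactly this fact that forces us to take $Q$ to be an $\oS$-image of a genuinely two-way-branching perfect tree, rather than, say, a single $P_s$, over whose internal $\mu_Q$-distribution we would have no control.
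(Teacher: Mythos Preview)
Your proof is correct and follows the same overall scheme as the paper's: build a finitely branching subtree $T\subseteq\omega^{<\omega}$ with $m([T])>0$, put $Q=\oS[[T]]=\bigcap_n\bigcup_{s\in T\cap\omega^n}P_s$, and then observe that for $s\in T$ the set $P_s\cap Q$ is a nonempty relative clopen in $Q$, hence carries positive $\mu_Q$-measure. The paper makes exactly the same positivity observation (phrased as ``there exists $w\in 2^{<\omega}$ with $[w]_Q\subseteq P_s$'').

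The only real difference is in the choice of $T$. The paper builds $T$ inductively, choosing at each node $w\in T\cap\omega^n$ a number $M_w\geq 2$ of children large enough that $2^{M_w}\geq 2^{n+2}\,|T\cap\omega^n|\,m([w])$; this ad hoc bookkeeping yields $m(\omega^\omega\setminus[T])\leq 1/2$. You instead take the fixed tree $T$ with child set $\{0,\dots,|s|+1\}$ at each node $s$, so that $[T]=\prod_k\{0,\dots,k+1\}$ and $m([T])=\prod_k(1-2^{-(k+2)})>0$ falls out of the product structure of $m$ immediately. Your choice is cleaner and avoids the inductive accounting; the paper's version has the minor advantage of giving an explicit bound $m([T])\geq 1/2$, but nothing in the statement requires that. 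Your framing via the tree $T_Q=\{s:\mu_Q(P_s)>0\}$ and the identity $\oS^{-1}[P_s]=[s]$ also makes the reduction to $m([T_Q])>0$ more transparent than the paper's direct computation.
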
 

\begin{proof} We define $T\subseteq \omega^{<\omega}$ inductively as follows: in the $n$-th step we  construct $T_{n}=T\cap \omega^n$, such that $|T_n|<\omega$ for all $n\in\omega$. Let $T_{0}=\{\zbp\}$. Assume that $T_{n}$ is constructed and $w\in T_n$. Let $M_{w} \geq 2$ be such that $2^{M_w}\geq 2^{n+2}\cdot |T_{n}|\cdot m([w])$ and $T_{n+1}=\{w^\frown k\colon w\in T_{n}\land k\in\omega\land k<M_{w}\}$. 

Therefore, if $w\in T_n$, then
\begin{equation*}
\begin{split}
 m\left([w]\setminus \bigcup\{w^\frown k\colon k< M_w\}\right)=m\left(\bigcup\{w^\frown k\colon k\geq M_w\}\right)=\\
=m([w])\cdot \sum_{i=M_w}^{\infty}\frac{1}{2^{i+1}}=\frac{m([w])}{2^{M_w}}\leq\frac{1}{2^{n+2}|T_n|}. 
\end{split}
\end{equation*}
Thus, for all $n\in\omega$,
\[m\left(\bigcup\{[s]\colon s\in T_{n}\}\setminus \bigcup\{[s]\colon s\in T_{n+1}\}\right)\leq\frac{1}{2^{n+2}},\]
so
\[m\left(\bigcup\{[s]\colon s\notin T\}\right)=m\left(\bigcup_{n\in\omega}\left(\bigcup\{[s]\colon s\in T_{n}\}\setminus \bigcup\{[s]\colon s\in T_{n+1}\right)\right)\leq\frac{1}{2}.\]
 Let $Q=\bigcap_{n\in\omega}\bigcup_{s\in T_{n}} P_{s}$. Obviously, $Q$ is a closed set. Moreover, if $s\in T$, there exists $w\in 2^{<\omega}$, $[w]_{Q}\subseteq P_{s}$. It should be clear since for all $n\in\omega$, $\{P_{s}\colon s\in T_{n}\}$ is a finite collection of disjoint perfect sets, and $Q\subseteq \bigcup_{s\in T_{n}} P_s$. Therefore, $Q$ is perfect and $\mu_{Q}(P_{s})>0$. On the other hand, if  $s\notin T$, then $P_{s}\cap Q=\zbp$, so $\mu_{Q}(P_{s})=0$. Therefore, if $\oS(x)\in P_{s}$ and $\mu_{Q}(P_{s})=0$, then $s\notin T$ and $x\in [s]$, so $m(\oS^{-1}[\bigcup\{P_{s}\colon \mu_{Q}(P_{s})=0\}])= m(\bigcup\{[s]\colon s\notin T\})\leq 1/2$. 
\end{proof}

Obviously, since for every diffused Borel measure $\mu$, there exists a Borel isomorphism of $2^{\omega}$ mapping $\mu$ to the Lebesgue measure (see e.g. \cite[Theorem 4.1(ii)]{em:zfbm}), if the class $\PN$ is closed under Borel automorphisms of $2^\omega$, then $\UN=\PN$, which motivates the following question, which was asked by the reviewer.

\begin{prob}
Is the class $\PN$ closed under homeomorphisms of $2^\omega$ onto itself?
\end{prob}

\subsection{Simple perfect sets}

To understand what may happen in the solution of the main open problem which was mentioned above, we restrict our attention to some special subfamilies of all perfect sets. This will lead to an important result in Theorem \ref{upnprod}. 

\begin{defin}
A perfect set $P$ will be called \emph{balanced} if $s_{i+1}(P)>S_{i}(P)$ for all $i\in\omega$. This definition  generalizes the notion of uniformly perfect set, which can be found in \cite{jbplst:rapspd}. A perfect set $P$ is \emph{uniformly} perfect if for any $i\in \omega$, either $2^{i}\cap T_{P}\subseteq \Spl(P)$ or $2^{i}\cap \Spl(P)=\zbp$. If additionally, in a uniformly perfect set $P$, $\forall_{w,v\in T_{P}, |w|=|v|}\forall_{j\in\{0,1\}}(w^\frown j\in T_P\rightarrow  v^\frown j\in T_P)$, then $P$ is called a \emph{Silver} perfect set (see for example \cite{mkantw:ssrtfn}). 
\end{defin}

A set that is null in any balanced (respectively, uniformly, Silver) perfect set will be called balanced perfectly null (respectively, uniformly perfectly null, Silver perfectly null). The class of such sets will be denoted by $\bPN$ (respectively, $\uPN$, $\vPN$). Obviously, $\PN\subseteq\bPN\subseteq\uPN\subseteq\vPN$. 

\begin{lem} \label{lem-bp}
There exists a perfect set $E$ such that for every balanced perfect set $B$, we have either $\mu_{B}(E)=0$ or $\mu_{E}(B)=0$.
\end{lem}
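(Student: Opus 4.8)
The plan is to use Lemma~\ref{lem-measure-p-0} as the only engine for forcing a canonical measure to vanish: to obtain $\mu_B(E)=0$ I will apply it with $P=B$, $X=E$, and to obtain $\mu_E(B)=0$ with $P=E$, $X=B$. Thus it suffices to construct a perfect $E$ so that for \emph{every} balanced $B$ there is a fixed $k\in\omega$ for which at least one of the following holds: (i) for every $t\in B$ there are infinitely many $n$ and some $w\in 2^{k}$ with $[t\obc_n{}^\frown w]_B\subseteq B\setminus E$; or (ii) the symmetric statement with the roles of $E$ and $B$ interchanged (so that $[t\obc_n{}^\frown w]_E\subseteq E\setminus B$). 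Two constraints on $E$ are forced at the outset: $E$ must be nowhere dense, since otherwise a clopen $B\subseteq E$ would give $\mu_B(E)=\mu_E(B)=1$; and $E$ must \emph{not} be balanced, since otherwise $B=E$ immediately violates the conclusion.

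The first real step is to record the rigidity of balanced sets, which is the feature I intend to exploit. Writing $c_n=\#\{j:S_j(B)<n\}$, I claim that for balanced $B$ every $w\in T_B\cap 2^n$ satisfies $l_B(w)\in\{c_n,c_n+1\}$, that $c_n$ is non-decreasing in $n$, and that $c_{n+1}-c_n\le 1$. All three follow from the disjointness of the level windows $[s_j(B),S_j(B)]$, which for a balanced set are pairwise disjoint intervals ordered by $j$: the number of $B$-branchings strictly below a length-$n$ node is $\#\{j:\text{the }j\text{-th branching on its branch has length }<n\}$, and this can exceed $c_n$ only through a single window straddling $n$, while at most one window can terminate at any given length. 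In short, a balanced set has an essentially length-determined, slowly increasing branching rate, and it is this monotone, $1$-Lipschitz rate $c_n$ that I will play against $E$.

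I would then build $E$ perfect and nowhere dense, \emph{synchronised at a rapidly increasing sequence of scales} $n_0<n_1<\cdots$, so that every branch of $E$ performs the same number $m_k$ of branchings inside each block $[n_k,n_{k+1})$ (hence $E$ looks balanced at the coarse scale), while inside each block the branching lengths are distributed in a strongly branch-dependent, \emph{anti-balanced} fashion, so that at lengths interior to a long block the values $l_E$ of distinct nodes differ by amounts tending to infinity. Given a balanced $B$, I compare its per-block increment $d_k=c_{n_{k+1}}-c_{n_k}$ with $m_k$. Rapid growth of the $n_k$ together with the monotone, increment-$\le 1$ behaviour of $c_n$ should pin down, cofinally and uniformly along branches, whether $B$ is locally richer than $E$ or poorer: where $B$ is richer one extracts from the sparseness of $E$ a continuation block landing in $B\setminus E$ and applies Lemma~\ref{lem-measure-p-0} with $(P,X)=(B,E)$, and where $E$ is richer one symmetrically obtains a block in $E\setminus B$ and applies it with $(P,X)=(E,B)$.

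The main obstacle is the intermediate regime in which $d_k$ lies between the slowest and fastest local rates realised by $E$ inside a block; there no single global comparison settles the matter, and this is exactly where the anti-balanced spreading of $E$ must be used quantitatively. Because the branchings of $B$ inside the block lie in pairwise disjoint windows, whereas those of $E$ are spread across the block in a branch-dependent way, a counting argument should still locate, along every branch of whichever set is locally poorer, a short continuation block escaping the other set. Making this extraction uniform in $t$ (as Lemma~\ref{lem-measure-p-0} requires a single $k$ and all $t$) and tuning the growth of the $n_k$, the counts $m_k$, and the spreading pattern so that the three regimes together cover all balanced $B$ is the technical heart of the argument, and is where I expect essentially all of the difficulty to lie.
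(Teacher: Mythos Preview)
Your proposal is a plan rather than a proof: you never specify $E$, and you explicitly leave the ``intermediate regime'' unresolved. More importantly, you have not pushed the rigidity of balanced sets far enough. Your $1$-Lipschitz observation on $c_n$ is correct but too weak; the decisive fact is that if a balanced $B$ has a branching point of \emph{every} sufficiently large length (say length $\ge i$), then $s_{j+1}(B)\le S_j(B)+1$ eventually, and combined with the balanced condition $s_{j+1}(B)>S_j(B)$ this forces $s_j(B)=S_j(B)$ and $s_{j+1}(B)=s_j(B)+1$ for all large $j$. In other words, $B$ is eventually a full clopen set: $B\cap[w]=[w]$ for every $w\in T_B\cap 2^{i}$. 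Once $B$ is eventually full and $E$ is nowhere dense, your case~(i) is immediate and the intermediate regime evaporates; your elaborate multi-scale, anti-balanced construction is aimed at a difficulty that does not exist.

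The paper exploits exactly this. Take the explicit periodic set $E=\{x:\forall k\;x[3k,3k+2]\in K\}$ with $K=\{000,001,011,111\}$; the point of $K$ is that its tree has a branching point at each of the three depths in a block. For a balanced $B$, either (b) along every branch $t\in E$ there are infinitely many $k$ with $\{t\obc_{3k}{}^\frown s:s\in K\}\not\subseteq T_B$, in which case Lemma~\ref{lem-measure-p-0} applied with $P=E$ gives $\mu_E(B)=0$; or else (a) along some branch of $E$ all but finitely many $K$-blocks lie in $T_B$, which forces $B$ to have a branching point of every sufficiently large length, hence by the rigidity above to be eventually full, and then the fact that $E$ omits some length-$3$ continuation below every node lets Lemma~\ref{lem-measure-p-0} (now with $P=B$) give $\mu_B(E)=0$. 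The dichotomy (a)/(b) is exhaustive by pure logic, so no rate comparison is needed.
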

\begin{proof} Consider $K=\{000,001,011,111\}\subseteq 2^{3}$ and a perfect set $E\in 2^{\omega}$ such that $x\in E$ if and only if $x[3k,3k+2]\in K$ for every $k\in\omega$ (see Figure~\ref{rys-lem-bp}). Let $B$ be a balanced perfect set. Imagine now how $T_{B}$ looks like in a $K$-block of $T_{E}$ (see Figure~\ref{rys-lem-bp}, where $T_B$ is shown as doted lines). Let $k\in\omega$ and $w\in T_E\cap 2^{3k}$. The following two situations are possible. Either $\{w^\frown s\colon s\in K\}\subseteq T_{B}$ (possibility $(a)$), or alternatively $\{w^\frown s\colon s\in K\}\setminus T_{B}\neq\zbp$ (possibility $(b)$).  

\begin{figure}[h]
\centering
\includegraphics[scale=0.6]{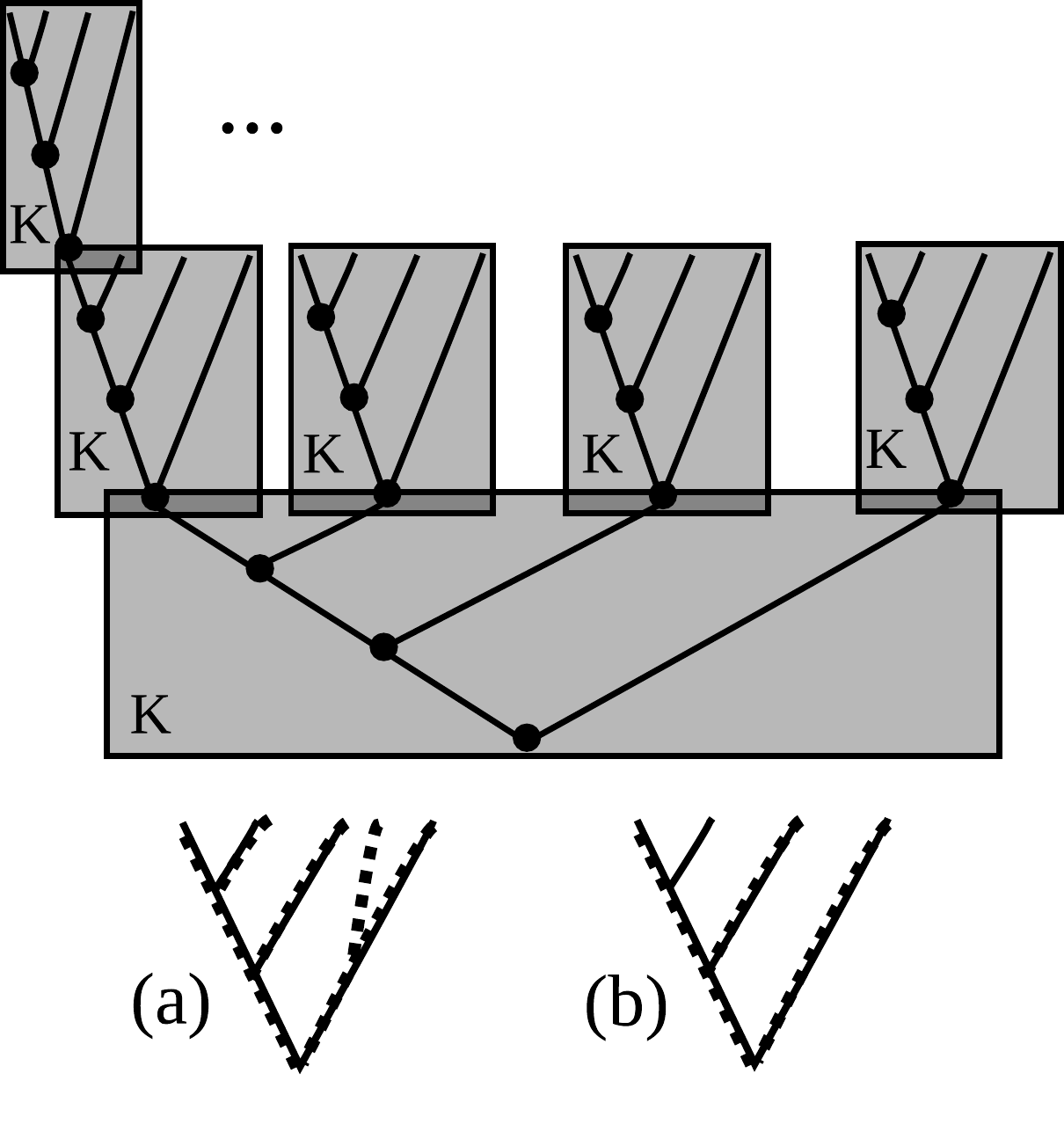} 
\caption{Proof of Lemma~\ref{lem-bp}.}
\label{rys-lem-bp}
\end{figure}

Assume that for all $t\in E$, there exist infinitely many $k\in\omega$ such that $\{t\obc_{3k}\,^\frown s\colon s\in K\}\setminus T_{B}\neq\zbp$ (case $(b)$). Then, by Lemma \ref{lem-measure-p-0}, $\mu_{E}(B)=0$. On the other hand, assume that there exists $t\in E$ such for all but finite $k\in\omega$, we have $\{t\obc_{3k}\,^\frown s\colon s\in K\}\subseteq T_{B}$ (case $(a)$). It follows that there exists $i\in\omega$, such that $B$ has a branching point of length $j$ for all $j\geq i$, so $s_{j+1}(B)\leq S_{j}(B)+1$, for any $j\geq i$. And since $B$ is a balanced perfect set, it implies, that $s_{j}(B)=S_j(B)$ and $s_{j+1}(B)=s_j(B)+1$ for any $j>i$. In other words, for $w\in T_{B}\cap 2^{i}$, $B\cap [w]=[w]$, and therefore for any $v\in T_B\cap 2^{3k}$ with $3k>i$, there exists $w\in 2^3$, such that $v^\frown w\in T_B\setminus T_E$. It follows that $\mu_{B}(E)=0$, by Lemma~\ref{lem-measure-p-0}. \end{proof}

\begin{prop}
Suppose that there exists a Sierpiński set. Then $\PN\subsetneq\bPN$.
\end{prop}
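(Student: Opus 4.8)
The plan is to produce a single witness set lying in $\bPN \setminus \PN$; since the inclusion $\PN \subseteq \bPN$ is already noted, only strictness is at stake. The natural candidate is built from the special perfect set $E$ of Lemma~\ref{lem-bp} together with a Sierpiński set. So let $S \subseteq 2^\omega$ be a Sierpiński set, let $h_E \colon 2^\omega \to E$ be the canonical homeomorphism, and set $A = h_E[S] \subseteq E$. I would then argue that $A \in \bPN$ but $A \notin \PN$.

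Showing $A \notin \PN$ is the easy direction: I would simply test $A$ against the perfect set $E$ itself. Because $h_E$ is a bijection, $h_E^{-1}[A] = S$, so $\mu_E^{*}(A) = m^{*}(S)$. A Sierpiński set cannot be $m$-null, since otherwise its defining property applied to the null set $S$ would force $S = S \cap S$ to be countable; hence $m^{*}(S) > 0$ and $A$ is not $E$-null, so $A$ is not perfectly null.

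The substance of the argument is $A \in \bPN$. Fix an arbitrary balanced perfect set $B$ and invoke Lemma~\ref{lem-bp}, splitting into its two alternatives. If $\mu_B(E) = 0$, then from $A \cap B \subseteq E \cap B$ and completeness of $\mu_B$ I conclude that $A \cap B$ is $B$-null. If instead $\mu_E(B) = 0$, then $h_E^{-1}[E \cap B]$ is an $m$-null set; I would use the identity $A \cap B = h_E[\,S \cap h_E^{-1}[E \cap B]\,]$, so that the Sierpiński property makes $S \cap h_E^{-1}[E \cap B]$ countable, whence $A \cap B$ is countable and therefore $B$-null, $\mu_B$ being diffused. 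In either case $A \cap B$ is $B$-null, giving $A \in \bPN$.

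The only point requiring care is the bookkeeping in the second case: verifying that $A \cap B$ pulls back under $h_E$ exactly to $S$ intersected with the $m$-null set $h_E^{-1}[E \cap B]$, which is what lets the Sierpiński hypothesis bite. Everything else—the positivity of $m^{*}(S)$, and the completeness and diffuseness of $\mu_B$ needed to pass from ``subset of a null set'' and ``countable'' to ``$B$-null''—is routine, and I expect no genuine obstacle beyond this verification.
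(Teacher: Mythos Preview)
Your proof is correct and follows essentially the same line as the paper's. The only cosmetic difference is that the paper directly takes a Sierpi\'nski set $S\subseteq E$ with respect to $\mu_E$, whereas you construct one explicitly as $A=h_E[S]$ for a standard Sierpi\'nski set $S$; since $h_E$ is a measure isomorphism between $(2^\omega,m)$ and $(E,\mu_E)$, your $A$ is precisely such a set, and the two arguments coincide thereafter.
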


\begin{proof} Let $E$ be the perfect set defined in Lemma~\ref{lem-bp}, and let $S\subseteq E$ be a Sierpiński set with respect to $\mu_{E}$. Obviously, $S$ is not perfectly null. But if $B$ is a balanced perfect set, then either $\mu_{B}(E)=0$, so $\mu_{B}(S)=0$, or $\mu_{E}(B)=0$, so $S\cap B$ is countable. Thus, $\mu_{B}(S)=0$. So $S\in\bPN\setminus \PN$. \end{proof}

\begin{prop} \label{bpnupn}
$\bPN\subsetneq \uPN\subsetneq \vPN$.
\end{prop}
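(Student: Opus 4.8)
The plan is to treat the two strict inclusions separately, after first disposing of the (easy) inclusions. For the inclusions I would observe that every Silver perfect set is uniformly perfect by definition, and that every uniformly perfect set is balanced: in a uniformly perfect set the branching lengths form an increasing sequence $l_0<l_1<\dots$ and $\Spl_i(P)$ consists exactly of the tree nodes of length $l_i$, so $s_i(P)=S_i(P)=l_i<l_{i+1}=s_{i+1}(P)$. Hence the family of Silver perfect sets is contained in the family of uniformly perfect sets, which is contained in the family of balanced sets; and since a smaller family of perfect sets yields a larger class of ``null in all of them'' sets, $\bPN\subseteq\uPN\subseteq\vPN$.

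For strictness my plan is to produce, for each inclusion, a single perfect witness, in direct analogy with Lemma~\ref{lem-bp}. For $\uPN\setminus\bPN$ I would construct a \emph{balanced} perfect set $E_1$ which is $\mu_U$-null in \emph{every} uniformly perfect set $U$. Such an $E_1$ then lies in $\uPN$, while $\mu_{E_1}(E_1)=1$ shows $E_1\notin\bPN$ (it fails to be null in the balanced set $E_1$ itself). Symmetrically, for $\vPN\setminus\uPN$ I would construct a \emph{uniformly} perfect set $E_2$ which is $\mu_V$-null in every Silver perfect set $V$; then $E_2\in\vPN$, while $\mu_{E_2}(E_2)=1$ gives $E_2\notin\uPN$. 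Note that, unlike in the previous proposition, no separating Sierpiński set is needed here, because in each case the witness is the perfect set itself.

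The heart of the argument is the two constructions, and both are verified through Lemma~\ref{lem-measure-p-0}. For $E_2$ I would fix disjoint infinite sets $B,N\subseteq\omega$ of ``branching'' and ``forced'' coordinates, let $E_2$ split both ways at every coordinate in $B$, and at each $n\in N$ prescribe a forced bit depending genuinely on the node (so that at each such level both values occur among the nodes of $T_{E_2}$); this makes $E_2$ uniformly perfect but not Silver. Given a Silver set $V=[g]$ with co-infinite $\dom g$, the rigidity of $V$ (it fixes each coordinate of $\dom g$ to a \emph{constant} and is free elsewhere) clashes with the node-dependent bits of $E_2$ at infinitely many coordinates, so that below every $t\in V$ one finds, infinitely often, a one-coordinate block $[t\obc_m{}^\frown j]_V$ disjoint from $E_2$; Lemma~\ref{lem-measure-p-0} (with $k=1$) then gives $\mu_V(E_2)=0$. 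For $E_1$ I would instead place the branching points of level $i$ at mutually distinct lengths (a maximally ``staggered'' balanced set), so that a uniformly perfect $U$, whose branchings are length-synchronised, can never stay aligned with $E_1$; again one exhibits, below every point of $U$, infinitely many $U$-blocks avoiding $E_1$ and applies Lemma~\ref{lem-measure-p-0}.

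The main obstacle I anticipate is precisely this combinatorial design together with the verification of the block-avoidance hypothesis of Lemma~\ref{lem-measure-p-0} uniformly over \emph{all} sets of the relevant smaller class --- in particular checking that a staggered balanced $E_1$ really is canonically null in every uniformly perfect set, and not merely in a typical one. If the balanced/uniform case should resist such a one-sided construction, the fallback is to prove only the dichotomy ``$\mu_U(E_1)=0$ or $\mu_{E_1}(U)=0$'' for every uniformly perfect $U$ (the exact analogue of Lemma~\ref{lem-bp}) and then separate using a Sierpiński subset of $E_1$ with respect to $\mu_{E_1}$, exactly as in the preceding proposition.
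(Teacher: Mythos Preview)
Your plan is essentially the paper's proof: exhibit a balanced perfect set that is $\mu_U$-null for every uniformly perfect $U$, and a uniformly perfect set that is $\mu_V$-null for every Silver perfect $V$. Your proposed witnesses are exactly the paper's --- the ``maximally staggered'' balanced set is precisely a balanced $B$ with $|\Spl(B)\cap 2^i|=1$ for every $i$, and your $E_2$ with node-dependent forced bits is (up to cosmetic choices) the paper's $U=\{\alpha:\forall i\;\alpha(2i+1)=\alpha(2i)\}$. For the Silver case your verification via Lemma~\ref{lem-measure-p-0} with $k=1$ is also what the paper does.

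The one place where you diverge is the verification in the balanced/uniform case, and this is exactly the spot you flagged as the ``main obstacle''. You propose to force the block-avoidance hypothesis of Lemma~\ref{lem-measure-p-0}; this can be made to work but is awkward, because for a point $t\in B\cap U$ lying along the (unique) branching node of $B$ at each $U$-branching length, the $k=1$ hypothesis can fail. The paper bypasses Lemma~\ref{lem-measure-p-0} entirely here and uses a one-line counting argument instead: in $T_B\cap T_U$, a node can split only at a branching length $l_i$ of $U$ (since $U$ does not branch elsewhere), and at each such $l_i$ at most \emph{one} node of $T_B\cap T_U$ can split (since $B$ has a single branching point of that length). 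Hence $T_B\cap T_U$ has at most $n+1$ nodes at $U$-level $n$, each of $\mu_U$-measure $2^{-n}$, giving $\mu_U(B)\le (n+1)/2^n\to 0$. This makes your Sierpi\'nski fallback unnecessary --- which matters, since invoking it would downgrade the proposition from a ZFC theorem to a consistency result.
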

\begin{proof} 
The first inclusion is proper, because if we take any balanced perfect set $B$ such that for each $i\in\omega$, we have $|\Spl(B)\cap 2^{i}|=1$ and any uniformly perfect set $U$, then $\mu_{U}(B)\leq (n+1)/2^{n}$ for any $n\in\omega$, so $B$ is $U$-null. Thus, $B\in uPN\setminus \bPN$.

To see that the second inclusion is proper, notice that the uniformly perfect set $U=\{\alpha\in 2^{\omega}\colon\forall_{i\in\omega} \alpha(2i+1)=\alpha(2i)\}$ is null in every Silver perfect set. Indeed, let $S$ be a Silver perfect set. Let $i\in \omega$ be such that for every $w\in 2^{2i}\cap S$, $w\in \Spl(S)$, or for every $w\in 2^{2i+1}\cap S$, $w\in \Spl(S)$. The following two cases are possible:
\begin{itemize}
\item for every $w\in 2^{2i}\cap S$, $w\in \Spl(S)$, so $w^\frown 0, w^\frown 1\in T_S$. Then $w^\frown 0^\frown 1\in T_S$ or $w^\frown 0^\frown 0\in T_S$. In the first case $w^\frown 0^\frown 1\in T_S\setminus T_U$. In the second $w^\frown 1^\frown 0\in T_S$, but $w^\frown 1^\frown 0\notin T_U$.
\item for every $w\in 2^{2i}\cap S$, $w\notin \Spl(S)$. Without a loss of generality, assume that $w^\frown 0\in T_S$. Then $w^\frown 0\in \Spl(S)$, and $w^\frown 0^\frown 1\in T_S\setminus T_U$. 
\end{itemize}
Since there exist infinitely many $i\in\omega$ such that $2^{2i}\cap S\subseteq \Spl(S)$ or $2^{2i+1}\cap S\subseteq \Spl(S)$, Lemma \ref{lem-measure-p-0} can be applied to get that $\mu_S(U)=0$.
 \end{proof}

\begin{prop}
The following conditions are equivalent for a set $A\subseteq 2^{\omega}$:
\begin{enumerate}[\upshape (i)]
\item $A$ is perfectly null,
\item for every perfect set  $P\subseteq 2^{\omega}$, $A\cap P$ is $P$-measurable, but for every balanced perfect set $Q\subseteq 2^{\omega}$, $Q\setminus A\neq\zbp$
\item for every perfect set  $P\subseteq 2^{\omega}$, $A\cap P$ is $P$-measurable and $A\in \bPN$.
\end{enumerate}
\end{prop}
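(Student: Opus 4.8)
The plan is to prove the three conditions equivalent by establishing the cycle (i) $\Rightarrow$ (ii) $\Rightarrow$ (iii) $\Rightarrow$ (i), relying heavily on the characterization already obtained in Proposition~\ref{prop-pn} and on the inclusion $\PN\subseteq\bPN$ noted just after the definition of the balanced class. The key observation throughout is that the two nontrivial implications only require that the ``$P\setminus A\neq\zbp$'' type of witness be found inside a \emph{balanced} perfect set, which is a weakening in conditions (ii) and (iii), so one must argue that this weaker requirement still forces full perfect nullity once combined with universal measurability on every perfect set.

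First I would treat (i) $\Rightarrow$ (ii). If $A$ is perfectly null, then by definition $A\cap P$ is $P$-null, hence $P$-measurable, for every perfect $P$; in particular this holds for every balanced perfect $Q$, where $\mu_Q(A\cap Q)=0$ forces $Q\setminus A\neq\zbp$ since $Q$ is uncountable (a $\mu_Q$-null set cannot exhaust $Q$). This direction is essentially immediate. For (ii) $\Rightarrow$ (iii), the real content is to show that (ii) yields $A\in\bPN$, i.e. $\mu_Q(A\cap Q)=0$ for every balanced $Q$. Here I would mimic the argument of Proposition~\ref{prop-pn}: suppose toward a contradiction that $\mu_Q(A\cap Q)>0$ for some balanced $Q$. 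Since $A\cap Q$ is $Q$-measurable, I can find inside it a closed set of positive $\mu_Q$-measure, and then a perfect subset $Q'\subseteq A\cap Q$. The delicate point is that $Q'$ obtained this way need not itself be balanced, whereas condition (ii) only provides witnesses $q\in Q''\setminus A$ for \emph{balanced} $Q''$. So the main obstacle is arranging the perfect subset of $A\cap Q$ to be balanced (or to contain a balanced perfect subset), so that condition (ii) applied to it produces the contradiction $Q''\setminus A\neq\zbp$ against $Q''\subseteq A$.

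To overcome this obstacle I would prove a small auxiliary fact: every perfect set $P$ contains a balanced perfect subset $P'\subseteq P$. This is a routine fusion/thinning construction — one passes to the successive splitting levels of $T_P$ and spreads them out so that the minimal length of a level-$(i+1)$ branching point exceeds the maximal length of a level-$i$ branching point, which is exactly the balanced condition $s_{i+1}(P')>S_i(P')$. Given this fact, the perfect set $Q'\subseteq A\cap Q$ from the previous paragraph contains a balanced perfect $Q''\subseteq Q'\subseteq A$; applying (ii) to $Q''$ gives $Q''\setminus A\neq\zbp$, contradicting $Q''\subseteq A$. Hence $\mu_Q(A\cap Q)=0$ for every balanced $Q$, i.e. $A\in\bPN$, and together with the measurability hypothesis this is exactly (iii).

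Finally, for (iii) $\Rightarrow$ (i), I would argue again by contradiction, exactly as in Proposition~\ref{prop-pn} but now using the measurability hypothesis of (iii) directly. If $A$ is not perfectly null, there is a perfect $P$ with $\mu_P(A\cap P)>0$; by (iii), $A\cap P$ is $P$-measurable, so it contains a perfect set, which by the auxiliary fact contains a balanced perfect set $Q''\subseteq A$ with $\mu_{Q''}(Q'')=1>0$. But $A\in\bPN$ forces $\mu_{Q''}(A\cap Q'')=0$, and since $Q''\subseteq A$ this reads $\mu_{Q''}(Q'')=0$, a contradiction. Thus $A$ must be perfectly null, closing the cycle. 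I expect the auxiliary ``balanced perfect subset'' lemma to be the one genuinely new ingredient; everything else is a rephrasing of the proof of Proposition~\ref{prop-pn} with the word ``balanced'' inserted in the right places.
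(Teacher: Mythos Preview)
Your proposal is correct and follows essentially the same approach as the paper: the paper's proof consists of the single observation that every perfect set contains a balanced perfect subset, and then refers back to the argument of Proposition~\ref{prop-pn} with the perfect set $Q$ there taken to be balanced. Your write-up simply unpacks that reference in detail, with the same auxiliary fact playing the same role.
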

\begin{proof} Notice that there exists a balanced perfect set in every perfect set. Therefore, in the proof of Proposition~\ref{prop-pn} we can require that the perfect set $Q$ is balanced. \end{proof}

\begin{xrem}
Notice that, even if a set is $P$-measurable for any perfect set and does not contain any uniformly perfect set, it needs not to be perfectly null. An example of such a set is the set $B$ from the proof of Proposition~\ref{bpnupn}. 
\end{xrem}

\begin{prop}
\begin{enumerate}[\upshape (i)]
\item $A\in\bPN$ if and only if for every balanced perfect $P\subseteq 2^{\omega}$, $A\cap P$ is $P$-measurable, but $P\setminus A\neq\zbp$.
\item $A\in\uPN$ if and only if for every uniformly perfect $P\subseteq 2^{\omega}$, $A\cap P$ is $P$-measurable, but $P\setminus A\neq\zbp$.
\item $A\in\vPN$ if and only if for every Silver perfect $P\subseteq 2^{\omega}$, $A\cap P$ is $P$-measurable, but $P\setminus A\neq\zbp$.
\end{enumerate}
\end{prop}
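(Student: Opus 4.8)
The plan is to prove all three equivalences simultaneously, since the argument is structurally identical in each case and mirrors the proof of Proposition~\ref{prop-pn}. Fix a class $\mathcal{C}$ of perfect sets (balanced, uniformly, or Silver), and write $\mathcal{C}\mathbf{PN}$ for the corresponding class of sets null in every $P\in\mathcal{C}$. I want to show $A\in\mathcal{C}\mathbf{PN}$ if and only if for every $P\in\mathcal{C}$ the trace $A\cap P$ is $P$-measurable but $P\setminus A\neq\zbp$. The forward direction is trivial: if $\mu_P(A)=0$ for all $P\in\mathcal{C}$, then $A\cap P$ is certainly $P$-measurable, and since $\mu_P(P)=1>0$ we cannot have $P\subseteq A$, so $P\setminus A\neq\zbp$.

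For the reverse direction I would argue by contraposition exactly as in Proposition~\ref{prop-pn}. Suppose $A\notin\mathcal{C}\mathbf{PN}$, so there is some $P\in\mathcal{C}$ with $A\cap P$ either non-$P$-measurable or of positive measure $\mu_P(A\cap P)>0$. In the first case the $P$-measurability hypothesis already fails, so assume $A\cap P$ is $P$-measurable with $\mu_P(A\cap P)>0$. Transporting through the canonical homeomorphism $h_P$, the set $h_P^{-1}[A\cap P]\subseteq 2^\omega$ has positive Lebesgue measure, hence by inner regularity contains a closed set of positive measure, and therefore an uncountable closed — hence perfect — subset $Q_0$. Then $Q=h_P[Q_0]\subseteq A\cap P$ is a perfect subset of $P$ contained in $A$, giving $Q\setminus A=\zbp$.

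The one genuine subtlety — and the step I expect to be the main obstacle — is that the perfect set $Q$ produced above need not itself belong to the restricted class $\mathcal{C}$; a perfect subset of a Silver (or uniformly, or balanced) perfect set is generally not again Silver (respectively uniformly, balanced). To close the equivalence I therefore need to refine $Q$ to a member of $\mathcal{C}$ while staying inside $A$. The key fact I would invoke is that every perfect set contains a perfect subset of each of these special types: inside any perfect tree one can recursively select branching points so as to synchronize their lengths, yielding a uniformly perfect subtree, and with a further uniform choice of branching coordinate a Silver subtree; the balanced case is even easier since balancedness is the weakest of the three conditions. Applying this to $Q\subseteq A$ produces a perfect set $R\in\mathcal{C}$ with $R\subseteq Q\subseteq A$, so $R\setminus A=\zbp$, contradicting the hypothesis that $P'\setminus A\neq\zbp$ for all $P'\in\mathcal{C}$.

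Finally, the equivalence with condition $P\setminus A\neq\zbp$ for all $P\in\mathcal{C}$ (rather than for all perfect $P$) is exactly what this refinement step secures, and it is the analogue of the observation used in the previous proposition that there is a balanced perfect set inside every perfect set. Having established that every perfect set contains a member of $\mathcal{C}$, the three statements follow in parallel, and I would present them together with a single remark that the selection argument specializes appropriately in each of the three cases.
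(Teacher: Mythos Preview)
Your argument for case~(i) is fine and matches the paper: every perfect set contains a balanced perfect subset, so once you have found any perfect $Q\subseteq A$ you can thin it to a balanced one. The gap is in cases~(ii) and~(iii). Your refinement step asserts that inside an arbitrary perfect set one can always find a uniformly perfect, or even a Silver perfect, subset. This is false. The balanced perfect set $B$ used in the proof of Proposition~\ref{bpnupn} (chosen so that $|\Spl(B)\cap 2^{i}|=1$ for every $i$) contains no uniformly perfect subset at all: since at each level of $T_B$ exactly one node branches, two incomparable nodes can never have descendants that branch at the same level, which is exactly what a uniformly perfect subtree would require. So the step ``refine $Q$ to a member of $\mathcal{C}$'' cannot be carried out in general.

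The paper avoids this by working on the other side of the canonical homeomorphism. The set $h_P^{-1}[A\cap P]\subseteq 2^{\omega}$ has positive Lebesgue measure, and by \cite[Lemma~2.4]{mkantw:ssrtfn} every set of positive Lebesgue measure contains a Silver perfect set $Q_0$. The point is then that when $P$ is uniformly (respectively, Silver) perfect, the image $h_P[Q_0]$ is again uniformly (respectively, Silver) perfect, so $h_P[Q_0]\subseteq A$ is the witness you need. In other words, the correct move is not to refine the perfect set $Q\subseteq P$ after pushing forward, but to choose $Q_0$ to be Silver already in $2^{\omega}$ and then observe that $h_P$ preserves the relevant structure. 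Your proof can be repaired by replacing the false refinement claim with this observation.
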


\begin{proof} We proceed as in the proof of Proposition~\ref{prop-pn}. For uniformly and Silver perfect sets we use  {\cite[Lemma~2.4]{mkantw:ssrtfn}}, which states that there exists a~Silver perfect set in every set of positive Lebesgue measure, and we notice that if $P$ is a uniformly (respectively, Silver) perfect set, and $h_{P}\colon 2^{\omega}\to P$ is the canonical homeomorphism, then the image of any Silver perfect set is uniformly (respectively, Silver) perfect. \end{proof}

\subsection{Perfectly null sets and $s_{0}$ and $v_{0}$ ideals}

Recall that a set $A$ is a Marczewski $s_{0}$-set if for any perfect set $P$, there exists a perfect set $Q\subseteq P$ such that $Q\cap A=\zbp$.

\begin{prop}
$\PN\subseteq\bPN\subseteq s_{0}$.
\end{prop}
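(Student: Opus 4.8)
The goal is to show that every set in $\bPN$ is a Marczewski $s_0$-set; since $\PN\subseteq\bPN$ by definition, this gives the full chain. So fix $A\in\bPN$ and an arbitrary perfect set $P\subseteq 2^\omega$; I must produce a perfect $Q\subseteq P$ with $Q\cap A=\zbp$.

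The plan is to reduce to the canonical measure on a balanced perfect set. First I would pass to a \emph{balanced} perfect subset $P'\subseteq P$, which exists inside every perfect set (this fact is already used in the Remark following the last displayed Proposition in the excerpt). Since $A\in\bPN$, the set $A$ is null with respect to $\mu_{P'}$, i.e.\ $\mu_{P'}(A\cap P')=0$. Transporting through the canonical homeomorphism $h_{P'}\colon 2^\omega\to P'$, the set $h_{P'}^{-1}[A\cap P']$ has Lebesgue outer measure zero in $2^\omega$. A Lebesgue-null set cannot contain a perfect set, and more importantly its complement contains a perfect set of positive measure; so I would pick a closed set $C\subseteq 2^\omega\setminus h_{P'}^{-1}[A\cap P']$ of positive measure and then a perfect $R\subseteq C$ (e.g.\ via the Silver perfect set lemma \cite[Lemma~2.4]{mkantw:ssrtfn} cited above, or simply because every closed set of positive measure contains a perfect subset). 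Setting $Q=h_{P'}[R]$ gives a perfect subset of $P'\subseteq P$ which is disjoint from $A$, since $h_{P'}$ is a homeomorphism carrying $R$ into the complement of $A\cap P'$ inside $P'$, and points of $Q$ lie in $P'$ so they miss $A$ entirely.

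Concretely the key steps are: (1) embed a balanced perfect set $P'$ into the given $P$; (2) invoke $A\in\bPN$ to get $\mu_{P'}(A\cap P')=0$, hence $m^*(h_{P'}^{-1}[A\cap P'])=0$; (3) choose a perfect set $R$ in the full-measure (or merely positive-measure) complement and set $Q=h_{P'}[R]$; (4) conclude $Q\cap A=Q\cap(A\cap P')=\zbp$. The two inclusions $\PN\subseteq\bPN$ are immediate from the definitions, so only the second inclusion $\bPN\subseteq s_0$ carries content.

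The only point requiring any care is step (3): one must confirm that a set of full Lebesgue measure (equivalently, the complement of a null set) genuinely contains a perfect set. This is standard — a null set has empty interior in measure, so its complement contains a closed positive-measure set, and every uncountable closed subset of $2^\omega$ contains a perfect set — but it is the step where one actually uses the structure of Lebesgue measure rather than formal manipulation of $\mu_{P'}$. I do not expect a genuine obstacle here; the argument is short and the hardest ingredient, the existence of balanced perfect subsets, is already available in the text.
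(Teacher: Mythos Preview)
Your proposal is correct and is essentially the paper's own argument: pass to a balanced perfect subset $B\subseteq P$, use $A\in\bPN$ to get $\mu_B(B\setminus A)=1$, extract a closed subset of positive measure (hence uncountable), and take a perfect $Q$ inside it. The only cosmetic difference is that you explicitly pull back through $h_{P'}$ to Lebesgue measure on $2^\omega$, whereas the paper works directly with $\mu_B$; these are the same step once one recalls the definition $\mu_B=m\circ h_B^{-1}$.
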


\begin{proof} Indeed, if $P$ is perfect and $X\in \bPN$, let $B\subseteq P$ be a balanced perfect set. Then $\mu_{B}(B\setminus X)=1$, so there exists a closed set $F\subseteq B\setminus X$ of positive measure. Therefore, it is uncountable, and there exists a perfect set $Q\subseteq F\subseteq P\setminus X$. \end{proof}

\begin{xrem}Obviously, $\uPN\not\subseteq s_{0}$ (see the proof of Proposition \ref{bpnupn}).\end{xrem}

We say that a set $X$ has $v_{0}$ property if for every Silver perfect set $P$, there exists a Silver perfect set $Q\subseteq P\setminus X$ (see \cite{mkantw:ssrtfn}). 

\begin{prop}
$\PN\subseteq\vPN\subseteq v_{0}$. 
\end{prop}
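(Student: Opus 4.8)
The plan is to dispose of the first inclusion at once and to model the second on the argument already used for $\PN\subseteq\bPN\subseteq s_{0}$. The inclusion $\PN\subseteq\vPN$ needs no work, since it is part of the chain $\PN\subseteq\bPN\subseteq\uPN\subseteq\vPN$ recorded above. So the entire content lies in showing $\vPN\subseteq v_{0}$.

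For $\vPN\subseteq v_{0}$, I would fix $X\in\vPN$ together with an arbitrary Silver perfect set $P$, and aim to produce a Silver perfect set $Q\subseteq P\setminus X$. Because $X$ is null in every Silver perfect set, $X\cap P$ is $P$-null, i.e.\ $\mu_{P}(X\cap P)=0$. By the definition of $\mu_{P}$ through the canonical homeomorphism $h_{P}\colon 2^{\omega}\to P$, this says $m(h_{P}^{-1}[X\cap P])=0$, so $h_{P}^{-1}[X\cap P]$ is Lebesgue null and its complement $h_{P}^{-1}[P\setminus X]=2^{\omega}\setminus h_{P}^{-1}[X\cap P]$ has full Lebesgue measure.

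Now I would apply {\cite[Lemma~2.4]{mkantw:ssrtfn}} to $h_{P}^{-1}[P\setminus X]$, which has positive (indeed full) Lebesgue measure: it contains a Silver perfect set $S$. Set $Q=h_{P}[S]$. Since $P$ is Silver perfect and $h_{P}$ is its canonical homeomorphism, the image of a Silver perfect set under $h_{P}$ is again Silver perfect — exactly the transfer fact invoked in the proof of the preceding proposition — so $Q$ is Silver perfect. Moreover $Q=h_{P}[S]\subseteq h_{P}\bigl[h_{P}^{-1}[P\setminus X]\bigr]=P\setminus X$, because $h_{P}$ is a bijection of $2^{\omega}$ onto $P$. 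Thus $Q\subseteq P\setminus X$ is the required Silver perfect set, and $X\in v_{0}$.

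The only genuinely delicate point is this transfer of the property ``Silver perfect'' across $h_{P}$: one must know that when $P$ itself is Silver, $h_{P}$ carries Silver perfect sets to Silver perfect sets, which is the structural fact already established for the previous proposition. Everything else is routine — the reduction to a full-measure subset of $2^{\omega}$ via $h_{P}^{-1}$, and the direct appeal to the existence of Silver perfect sets inside sets of positive Lebesgue measure.
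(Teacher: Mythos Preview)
Your proof is correct and follows essentially the same route as the paper's: pull back via $h_{P}$, observe the complement of $h_{P}^{-1}[X]$ has full Lebesgue measure, invoke \cite[Lemma~2.4]{mkantw:ssrtfn} to find a Silver perfect subset there, and push forward using the fact that $h_{P}$ preserves Silver perfectness when $P$ is Silver. The only cosmetic difference is that the paper names the Silver perfect subset of $2^{\omega}$ by $Q$ and then takes $h_{P}[Q]$, whereas you call it $S$ and set $Q=h_{P}[S]$.
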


\begin{proof} Let $P\subseteq 2^{\omega}$ be a Silver perfect set, and let $X\in\vPN$. Notice that the image of any Silver perfect set under the canonical homeomorphism $h_{P}\colon 2^{\omega}\to P$ is a Silver perfect set. Since $m(2^{\omega}\setminus h_{P}^{-1}[X])=1$, there exists a~Silver perfect set $Q\subseteq 2^{\omega}\setminus h_{P}^{-1}[X]$ (see \cite[Lemma 2.4]{mkantw:ssrtfn}). So, $h_{P}[Q]\subseteq P\setminus X$ is a Silver perfect set. \end{proof}  

M.~Scheepers (see \cite{ms:apsrnig}) proved that if $X$ is a measure zero set with $s_0$ property, and $S$ is a Sierpiński set, then $X+S$ is also an $s_{0}$-set. Therefore, we easily obtain the following proposition.

\begin{prop}\label{pn-sierp-s0}
The algebraic sum of a Sierpiński set and a perfectly null set is an $s_{0}$-set.
\end{prop}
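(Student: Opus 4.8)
The plan is to apply the quoted theorem of Scheepers directly: it guarantees that $X+S$ is an $s_{0}$-set whenever $X$ is Lebesgue null, $X$ has the $s_{0}$ property, and $S$ is a Sierpiński set. So I would fix a perfectly null set $A$ and a Sierpiński set $S$, and reduce the whole proposition to checking that $A$ satisfies the two hypotheses imposed on $X$.

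First I would verify that $A$ is Lebesgue null. The key observation is that for $P=2^{\omega}$ every finite sequence is a branching point, i.e.\ $\Spl(2^{\omega})=2^{<\omega}$, so the canonical homeomorphism $h_{2^{\omega}}$ is the identity map and consequently $\mu_{2^{\omega}}=m$. Since $A$ is perfectly null, it is in particular $2^{\omega}$-null, which literally means $m(A)=\mu_{2^{\omega}}(A)=0$. Thus $A$ is a measure zero set.

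Second, the $s_{0}$ property of $A$ is already available: by the earlier proposition we have $\PN\subseteq\bPN\subseteq s_{0}$, so every perfectly null set is a Marczewski $s_{0}$-set. With both hypotheses in hand, Scheepers' theorem applied to $X=A$ and the given $S$ yields that $A+S$ is an $s_{0}$-set, which is exactly the claim.

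The argument is essentially immediate once the reductions are made, so there is no serious obstacle; the only point requiring care is the bookkeeping remark that the canonical measure on the full space $2^{\omega}$ coincides with Lebesgue measure, which is what upgrades ``$P$-null for every $P$'' to ``Lebesgue null.'' After that, the proposition is just the conjunction of the external result of Scheepers with the inclusion $\PN\subseteq s_{0}$.
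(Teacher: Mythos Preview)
Your proof is correct and follows exactly the same approach as the paper: both apply Scheepers' theorem after observing that a perfectly null set is Lebesgue null (via $\mu_{2^{\omega}}=m$) and is in $s_{0}$ (via the inclusion $\PN\subseteq s_{0}$ proved earlier). The paper simply leaves these two verifications implicit, while you spell them out.
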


\hfill $\square$

\subsection{Products}

We consider $\PN$ sets in the product $2^{\omega}\times 2^{\omega}$ using the natural homeomorphism $h\colon 2^{\omega}\times 2^{\omega}\to 2^{\omega}$ defined as $h(x,y)=\left<x(0),y(0),x(1),y(1),\ldots\right>$.

It is consistent with ZFC that the product of two perfectly meager sets is not perfectly meager (see \cite{ir:ppms}, \cite{jp:ppmslf}).  If the answer to the Problem~\ref{cru-pro} is positive, then it makes sense to ask the following question.

\begin{prob} \label{pn-prod}
Is the product of any two perfectly null sets perfectly null?
\end{prob}

This problem remains open, but in the easier case of Silver perfect sets, the answer is in the affirmative. First, notice the following simple lemma.

\begin{lem}\label{perf-prod}
Let $P,Q\subseteq 2^{\omega}$ be perfect sets. Then $\mu_{P\times Q}=\mu_P\times \mu_Q$. In particular, if $X\subseteq 2^{\omega}\times 2^{\omega}$ is such that $\pi_{1}[X]$ is $P-null$, then $\mu_{P\times Q}(X)=0$. 
\end{lem}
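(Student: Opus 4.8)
The plan is to identify both $\mu_{P\times Q}$ and $\mu_P\times\mu_Q$ as probability measures on $P\times Q$ and to show that they agree on a generating algebra of clopen rectangles, after which the uniqueness of measures forces them to coincide. First I would record that for any perfect $R$ we have $\mu_R(R)=m(h_R^{-1}[R])=m(2^\omega)=1$, so $\mu_P,\mu_Q,\mu_{P\times Q}$ and hence $\mu_P\times\mu_Q$ are all probability measures. It therefore suffices to check equality on the finite clopen partitions $\{[u]_P\times[v]_Q\colon u\in T_P\cap 2^k,\ v\in T_Q\cap 2^k\}$ for $k\in\omega$; these refine as $k$ grows, and the algebra they generate generates the Borel $\sigma$-algebra of $P\times Q$.

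The value for $\mu_P\times\mu_Q$ is immediate: $(\mu_P\times\mu_Q)([u]_P\times[v]_Q)=\mu_P([u]_P)\,\mu_Q([v]_Q)=2^{-l_P(u)}\,2^{-l_Q(v)}$. For $\mu_{P\times Q}$ I would use that under the interleaving homeomorphism $h$, a rectangle $[u]_P\times[v]_Q$ with $|u|=|v|=k$ is carried onto $[w]_{P\times Q}$, where $w\in 2^{2k}$ is the interleaving of $u$ and $v$; indeed $h(x,y)\obc_{2k}$ is determined exactly by $x\obc_k$ and $y\obc_k$. Hence $\mu_{P\times Q}([w]_{P\times Q})=2^{-l_{P\times Q}(w)}$, and the whole statement reduces to the combinatorial identity
\[ l_{P\times Q}(w)=l_P(u)+l_Q(v). \]
To prove it I would analyse the branching points of $T_{P\times Q}$: a prefix of $w$ of even length $2j$ is a branching point of $P\times Q$ exactly when the next bit decides a $P$-coordinate, i.e. when $u\obc_j\in\Spl(P)$, whereas a prefix of odd length $2j+1$ is a branching point exactly when $v\obc_j\in\Spl(Q)$. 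Counting the branching points strictly below $w$ thus splits into the even ones, numbering $|\{j<k\colon u\obc_j\in\Spl(P)\}|=l_P(u)$, and the odd ones, numbering $|\{j<k\colon v\obc_j\in\Spl(Q)\}|=l_Q(v)$, which yields the identity. Combining, $\mu_{P\times Q}([w]_{P\times Q})=2^{-l_P(u)-l_Q(v)}=(\mu_P\times\mu_Q)([u]_P\times[v]_Q)$, so the two measures agree on the generating algebra and hence everywhere.

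For the ``in particular'' clause I would argue that if $\pi_1[X]$ is $P$-null then there is a Borel set $N\supseteq\pi_1[X]$ with $\mu_P(N)=0$; then $X\cap(P\times Q)\subseteq N\times Q$, and by the product formula just established $\mu_{P\times Q}(N\times Q)=\mu_P(N)\,\mu_Q(Q)=0$, whence $\mu_{P\times Q}(X)=0$ by monotonicity of the outer measure.

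The main obstacle is the branching-level identity in the second paragraph: because the branching points of $P$ and of $Q$ occur at unrelated lengths, they interleave irregularly inside $T_{P\times Q}$, so the level cannot be read off a fixed splitting of coordinates but must be counted by tracking even versus odd positions, keeping the indexing conventions (strict prefixes, the level-$0$ case) consistent with the definitions of $l_P$ and $\Spl_i$. Once this bookkeeping is carried out correctly, the remaining steps are the standard $\pi$-system/uniqueness argument and the elementary covering estimate.
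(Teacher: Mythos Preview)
Your proposal is correct and follows essentially the same approach as the paper: both arguments reduce to the identity $\mu_{P\times Q}([w]_{P\times Q})=2^{-l_P(u)}\,2^{-l_Q(v)}$ for the interleaved word $w$, which the paper establishes by a step-by-step induction (case analysis on whether $u\obc_n$, $v\obc_n$ are branching points) and you establish by the equivalent direct counting of even- versus odd-length branching prefixes. The ``in particular'' clause and the passage from basic clopens to all Borel sets are handled the same way in both.
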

\begin{proof} 
First, we shall prove that for any $n\in\omega$ and any $v\in 2^{2n}$,
\[\mu_{P\times Q}\left([v]_{P\times Q}\right)=\frac{1}{2^{l_P(w_P)}}\cdot \frac{1}{2^{l_Q(w_Q)}},\]
where $w_P,w_Q\in 2^n$ are such that for any $i<n$, $w_P(i)=v(2i)$ and $w_Q(i)=v(2i+1)$. This assertion can be proved by induction on $n$. For $n=0$, we get $v=w_P=w_Q=\zbp$, and $\mu_{P\times Q}([v]_{P\times Q})=1=\frac{1}{2^{l_P(w_P)}}\cdot \frac{1}{2^{l_Q(w_Q)}}$.

Now consider $v\in 2^{2(n+1)}$. Then 
\begin{itemize}
\item if both $w_P\obc_n$ and $w_Q\obc_n$ are branching points in $P$ and $Q$ respectively (so $l_P(w_P)=l_P(w_P\obc_n)+1$ and $l_Q(w_Q)=l_Q(w_Q\obc_n)+1$), then $v\obc_{2n}\in \Spl(P\times Q)$ and $v\obc_{2n+1}\in \Spl(P\times Q)$, and so $\mu_{P\times Q}([v]_{P\times Q})=1/2\cdot 1/2\cdot \mu_{P\times Q}([v\obc_{2n}]_{P\times Q})= 1/2\cdot 1/2 \cdot 1/2^{l_P(w_P\obc_n)}\cdot 1/2^{l_Q(w_Q\obc_n)}=1/2^{l_P(w_P)}\cdot 1/2^{l_Q(w_Q)}$. 
\item if $w_P\obc_n$ or $w_Q\obc_n$, but not both, is a branching point in $P$ or $Q$ respectively, we may assume without a loss of generality that $w_P\obc_n\in \Spl(P)$ and $w_Q\obc_n\notin \Spl(Q)$ (so $l_P(w_P)=l_P(w_P\obc_n)+1$ and $l_Q(w_Q)=l_Q(w_Q\obc_n)$). Then $v\obc_{2n}\in \Spl(P\times Q)$, but $v\obc_{2n+1}\notin \Spl(P\times Q)$, and so $\mu_{P\times Q}([v]_{P\times Q})=1/2\cdot 1\cdot \mu_{P\times Q}([v\obc_{2n}]_{P\times Q})= 1/2\cdot 1 \cdot 1/2^{l_P(w_P\obc_n)}\cdot 1/2^{l_Q(w_Q\obc_n)}=1/2^{l_P(w_P)}\cdot 1/2^{l_Q(w_Q)}$. 
\item if $w_P\obc_n\notin \Spl(P)$ and $w_Q\obc_n\notin \Spl(Q)$ (so $l_P(w_P)=l_P(w_P\obc_n)$ and $l_Q(w_Q)=l_Q(w_Q\obc_n)$), then $v\obc_{2n}, v\obc_{2n+1}\notin \Spl(P\times Q)$, and so $\mu_{P\times Q}([v]_{P\times Q})=1\cdot 1\cdot \mu_{P\times Q}([v\obc_{2n}]_{P\times Q})= 1/2^{l_P(w_P\obc_n)}\cdot 1/2^{l_Q(w_Q\obc_n)}=1/2^{l_P(w_P)}\cdot 1/2^{l_Q(w_Q)}$, 
\end{itemize}
which concludes the induction argument. Since every open set in $P\times Q$ is a countable union of sets of form $[v]_{P\times Q}$, with $v\in 2^{2n}$, $n\in\omega$, this concludes the proof of the Lemma.

\end{proof}

\begin{prop}\label{vpnprod}
If $X,Y\in\vPN$, then $X\times Y\in \vPN$ in $2^{\omega}\times 2^{\omega}$.
\end{prop}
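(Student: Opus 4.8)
The plan is to exploit the very rigid structure of Silver perfect sets under the coordinate-splitting homeomorphism $h$. First I would observe that a Silver perfect set $S\subseteq 2^{\omega}$ is nothing but a set of the form $[p]=\{x\colon x\obc_{\operatorname{dom}p}=p\}$ for a partial function $p\colon\omega\to 2$ with co-infinite domain: the branching (free) coordinates are exactly those outside $\operatorname{dom}p$, and on the remaining coordinates every branch of $T_S$ carries the fixed value prescribed by $p$. Pulling $S$ back through $h$ and separating even from odd coordinates, the constraints on the $x$-coordinates and on the $y$-coordinates become completely independent, so
\[h^{-1}[S]=P\times Q,\]
where $P=[p_x]$ and $Q=[p_y]$ with $p_x(i)=p(2i)$ and $p_y(i)=p(2i+1)$ (defined exactly when $2i$, respectively $2i+1$, lies in $\operatorname{dom}p$). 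Both $P$ and $Q$ are Silver sets, and since the complement of $\operatorname{dom}p$ is infinite it meets at least one parity infinitely often; hence at least one of $P,Q$ has infinitely many free coordinates and is therefore a genuine Silver perfect set.

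Second, by symmetry I may assume $P$ is Silver perfect (otherwise I swap the roles of the two coordinates and of $X,Y$, using $Y\in\vPN$ in place of $X\in\vPN$). Since $X\in\vPN$ and $P$ is Silver perfect, $\mu_{P}(X)=\mu_{P}(X\cap P)=0$, i.e.\ $X\cap P$ is $P$-null. Now $(X\times Y)\cap(P\times Q)=(X\cap P)\times(Y\cap Q)\subseteq(X\cap P)\times Q$, so it suffices to show $\mu_{S}((X\cap P)\times Q)=0$. If $Q$ is also perfect this is immediate from Lemma~\ref{perf-prod}: the first projection of $(X\cap P)\times Q$ is the $P$-null set $X\cap P$, whence $\mu_{P\times Q}=\mu_S$ forces measure zero. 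The set $X\times Y$ is then contained in a $\mu_S$-null set, so it is $\mu_S$-measurable with $\mu_S(X\times Y)=0$; as $S$ was an arbitrary Silver perfect set, this yields $X\times Y\in\vPN$.

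The main obstacle is the degenerate case in which the non-perfect factor, say $Q$, is actually finite (only finitely many odd coordinates are free), so that Lemma~\ref{perf-prod}, stated for two perfect factors, does not literally apply. The fix is to note that $Q$ is then a finite set, say $|Q|=2^{k}$, and that specifying the $k$ free $y$-coordinates partitions $S$ into $2^{k}$ pairwise disjoint clopen slices $P\times\{q\}$, each of $\mu_S$-measure $1/2^{k}$ and each canonically isomorphic to $P$, with $\mu_S$ restricting to $\tfrac{1}{2^{k}}\mu_P$ on it. Because $((X\cap P)\times Q)\cap(P\times\{q\})=(X\cap P)\times\{q\}$ has the $P$-null first slice $X\cap P$, each slice contributes measure $0$; summing the finitely many contributions (or invoking Corollary~\ref{mp-podperfect}) gives $\mu_S((X\cap P)\times Q)=0$, and the argument concludes exactly as above. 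Equivalently, one verifies the Fubini identity $\mu_{S}=\mu_{P}\times\mu_{Q}$ in this mixed case, with $\mu_Q$ the normalized counting measure on the finite set $Q$, which is the only genuinely new computation the proof requires.
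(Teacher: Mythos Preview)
Your proposal is correct and follows essentially the same approach as the paper: you decompose an arbitrary Silver perfect set via the even/odd coordinate splitting into a product $P\times Q$ of two Silver sets, observe that at least one factor is genuinely perfect, apply Lemma~\ref{perf-prod} when both are perfect, and handle the degenerate (one factor finite) case by slicing and invoking Corollary~\ref{mp-podperfect}. This is exactly what the paper does, with only cosmetic differences in notation and in how much detail you give for the finite-factor case.
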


\begin{proof} Fix a Silver perfect set $P$. Recall that such a set is uniquely defined by a sequence $\left<a_n\right>_{n\in\omega}$, $a_n\in\{-1,0,1\}$ such that $\{n\in\omega\colon a_n=-1\}$ is infinite, $T_P$ splits on all branches at length $n\in \omega$ if and only if $a_n=-1$, and $t(n)=a_n$ for all $t\in P$ for any other $n\in\omega$. Let $T_1$ be a tree which splits on all branches at length $n$ if and only if $a_{2n}=-1$, and $t(n)=a_{2n}$ for any $t\in[T_1]$ for any other $n\in\omega$. Finally, let $T_2$ be a tree which splits on all branches at length $n$ if and only if $a_{2n+1}=-1$, and $t(n)=a_{2n+1}$ for any $t\in[T_2]$ for any other $n\in\omega$. Let $P_1=[T_1]$ and $P_2=[T_2]$. If $\{2n\in\omega\colon a_n=-1\}$ is infinite, then $P_1$ is a Silver perfect set. On the other hand, if $\{2n\in\omega\colon a_n=-1\}$ is finite, then $P_1$ is also finite.  Analogously, if $\{2n+1\in\omega\colon a_n=-1\}$ is infinite, then $P_2$ is a Silver perfect set. On the other hand, if  $\{2n+1\in\omega\colon a_n=-1\}$ is finite, then $P_2$ is also finite. Moreover, $P=P_1\times P_2$.

If $P_{1}$ and $P_{2}$ are Silver perfect, then by Lemma~\ref{perf-prod}, $\mu_{P}(X\times Y)=0$. 

The other case is when $P_1$ or $P_2$, but not both, is finite. Without a loss of generality, we may assume that $P_2$ is finite. Then $P=\bigcup_{t\in P_2} P_1\times\{t\}$. Obviously, for any $t\in Y$, $\mu_{P_1\times \{t\}}(X\times Y)=\mu_{P_1}(X)=0$, so by Corollary~\ref{mp-podperfect}, also $\mu_{P}(X\times Y)=0$.\end{proof}

On the other hand, it is consistent with ZFC that the classes $\uPN$ and $\bPN$ are not closed under taking products.

\begin{thm}
\label{upnprod}
If there exists a Sierpiński set, then there are $X,Y\in\bPN$, such that $X\times Y\notin\uPN$.
\end{thm}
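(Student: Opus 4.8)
The plan is to realise the witnesses $X,Y$ as the coordinate projections of a single Sierpiński set placed inside a cleverly chosen uniformly perfect subset of the product. Let $E$ be the perfect set from Lemma~\ref{lem-bp}, so that $E$ is not balanced but enjoys the dichotomy: for every balanced $B$, either $\mu_B(E)=0$ or $\mu_E(B)=0$. I would first construct a uniformly perfect set $U\subseteq 2^\omega\times 2^\omega$ with $U\subseteq E\times E$, such that both projections $\pi_1[U],\pi_2[U]$ are contained in $E$ and the pushed-forward canonical measures $(\pi_i)_*\mu_U$ are absolutely continuous with respect to $\mu_E$ (in fact one aims for $(\pi_i)_*\mu_U=\mu_E$). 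Granting such a $U$, let $h_U\colon 2^\omega\to U$ be its canonical homeomorphism, fix a Sierpiński set $S\subseteq 2^\omega$, and put $Z=h_U[S]$, $X=\pi_1[Z]$, $Y=\pi_2[Z]$. Since $h_U$ carries $m$ to $\mu_U$, the set $Z$ has full $\mu_U$-outer measure and meets every $\mu_U$-null set in a countable set.

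That $X\times Y\notin\uPN$ is then immediate: as $z=(\pi_1(z),\pi_2(z))$ for every $z\in Z$, we have $Z\subseteq X\times Y$, hence $\mu_U^*(X\times Y)\ge\mu_U^*(Z)=1>0$; since $U$ is uniformly perfect, $X\times Y$ fails to be null in every uniformly perfect set.

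The heart of the verification is that $X,Y\in\bPN$; I treat $X$, the argument for $Y$ being symmetric. Fix a balanced perfect $B$ and apply the dichotomy of Lemma~\ref{lem-bp}. If $\mu_B(E)=0$, then, since $X\subseteq\pi_1[U]\subseteq E$, also $\mu_B(X)\le\mu_B(E)=0$. If instead $\mu_E(B)=0$, then $B\cap E$ is $\mu_E$-null, so by absolute continuity $(\pi_1)_*\mu_U(B)=\mu_U(\pi_1^{-1}[B])=0$; thus $\pi_1^{-1}[B]$ is $\mu_U$-null, $Z\cap\pi_1^{-1}[B]$ is countable, and therefore $X\cap B=\pi_1[Z\cap\pi_1^{-1}[B]]$ is countable and in particular $\mu_B$-null. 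Hence $\mu_B(X)=0$ for every balanced $B$, i.e.\ $X\in\bPN$. This uses in an essential way that $E$ is not balanced: a Sierpiński-type set has full outer measure in $E$, which is harmless precisely because $E$ is excluded from the family defining $\bPN$.

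The real work, and the step I expect to be the main obstacle, is the construction of the uniformly perfect $U\subseteq E\times E$ with projections controlled as above. A naive rectangle $U=P\times Q$ is useless: by Lemma~\ref{perf-prod} it is uniformly perfect only when both $P$ and $Q$ are, and then $\mu_P(X)=\mu_Q(Y)=0$ forces $\mu_U(X\times Y)=0$. Thus $U$ must genuinely entangle the two coordinates. The mechanism I would exploit is that in a uniformly perfect set a \emph{forced} bit of one coordinate may depend on earlier bits of the other coordinate; this lets the projection $\pi_1[U]$ branch at product-lengths where $U$ itself does not branch, so that $\pi_1[U]$ can trace the non-uniform block structure of $E$ while the product tree still splits in full layers along a data-independent schedule of lengths (which is exactly what uniformity demands). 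Concretely, I would process the $K$-blocks of the two coordinates on an interleaved schedule, branching only at positions that are unconditionally free in $E$ and realising every conditionally-free bit of one coordinate by forcing it equal to a fair bit harvested from the other coordinate. The delicate point is the bookkeeping of this budget of fair bits: each block supplies fewer free bits than the forcing rules consume, so one must reuse fair bits (drawn from sufficiently remote, hence independent, blocks) as forcing inputs while checking that each coordinate's marginal stays $\mu_E$, equivalently $(\pi_i)_*\mu_U\ll\mu_E$. Verifying that this dovetailing can be arranged with the branch/no-branch pattern independent of the data—so that $U$ is genuinely uniformly perfect—is where essentially all of the combinatorial content of the theorem lies.
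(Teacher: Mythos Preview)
Your overall architecture is exactly that of the paper: put a Sierpi\'nski set inside a carefully chosen uniformly perfect subset of the square, take its two coordinate projections as $X$ and $Y$, and use a Lemma~\ref{lem-bp}-type dichotomy on the projection set to show $X,Y\in\bPN$. The verification you give for $X\times Y\notin\uPN$ and for $X\in\bPN$ (under the dichotomy plus absolute continuity of the pushforward) is correct and is precisely how the paper argues.

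The gap is that you stop before doing what you yourself call ``the real work'': you never build the uniformly perfect $U$ with the required projection behaviour, and your suggested mechanism (reusing fair bits from remote blocks) is left as a heuristic. The paper carries this step out explicitly, and its key move dissolves your bit-budget worry without any cross-block borrowing: instead of working over $E$, it replaces $E$ by the set $Q$ whose $4$-blocks are $L=\{0,1\}{}^\frown K$, i.e.\ one genuinely free bit prepended to each $K$-block. This single extra bit per coordinate and per block is exactly the surplus needed. One then writes down, by hand, a $16$-element set $J\subseteq 2^{8}$ such that the block-periodic set $P=\{x:\forall n\ x[8n,8n+7]\in J\}$ is uniformly perfect (it branches at positions $0,1,2,3$ of every $8$-block and is forced on $4,5,6,7$), satisfies $\pi_1[P]=\pi_2[P]=Q$, and has $(\pi_1)_{*}\mu_P=\mu_Q$ (checked by a direct tabulation of how many $s\in J$ project to each $w\in L$). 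Since $L$ consists of two $K$-blocks joined by a free root, the Lemma~\ref{lem-bp} dichotomy for $Q$ versus balanced perfect sets goes through verbatim. Your attempt to stay over $E$ itself, where each $3$-block supplies only two branching levels distributed unevenly among paths, is precisely where the budget is too tight for a block-local construction; the paper's passage from $E$ to $Q$ is the missing idea.
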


\begin{proof}Let $J\subseteq 2^{8}$ be as shown in Figure~\ref{rys-prod-upn} ($J=\{00000000,$ $00010111,$ $00101011,$ $00111111,$ $01001010,$ $01011111,$ $01101011,$ $01111111,$ $10000101,$ $10010111,$ $10101111,$ $10111111,$ $11001111,$ $11011111,$ $11101111,$ $11111111\}$). Let $P$ be a perfect set such that $x\in P$ if and only if for all $n\in\omega$, $x[8n,8n+7]\in J$. Obviously, $P$ is a uniformly perfect set. Let $Q=\pi_{1}[P]$. Notice that $x\in Q$ if and only if for all $n\in\omega$, $x[4n,4n+3]\in L$, where $L=\{0000,$ $0001,$ $0011,$ $0111,$ $1000,$ $1001,$ $1011,$ $1111\}\subseteq 2^4$ (see Figure~\ref{rys-prod-upn} and Table~\ref{t1-upnprod}).

  Notice that $L$ consists of two $K$-blocks (see the proof of Lemma~\ref{lem-bp}) joined by an additional root.

\begin{figure}[h]
\centering
\includegraphics[scale=0.6]{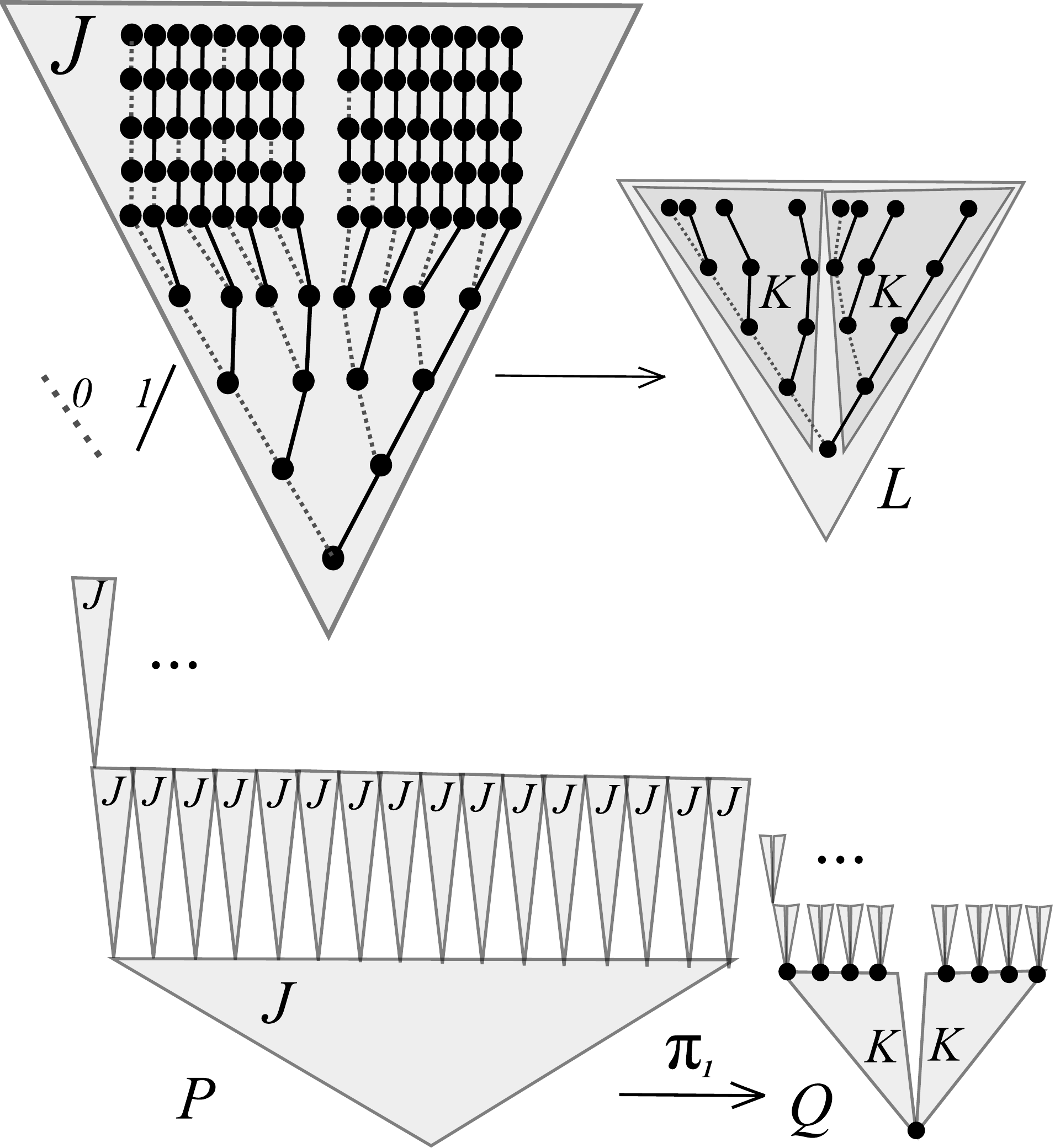} 

\caption{Proof of Theorem~\ref{upnprod}.}
\label{rys-prod-upn}
\end{figure} 

\begin{table}[h]
\centering

{\footnotesize
\begin{tabular}{c|c|c|c}
{\footnotesize $s\in J$} & {\footnotesize $w=s\left<0,2,4,6\right>$} & {\footnotesize $\mu_{Q}$} & {\footnotesize $\mu_{P}$}\\
 & {\scriptsize $\in L$} & {\scriptsize $\{x\in Q\colon x[4n,4n+3]=w\}$} & {\scriptsize $\pi_1^{-1}[\{x\in Q\colon x[4n,4n+3]=w\}]$} \\
 \hline\hline
$00000000$ & $0000$ & $1/16$ & $1/16$ \\ \hline
$00010111$ & $0001$ & $1/16$ & $1/16$\\ \hline
$00101011$ & $0111$ & &\\
$00111111$ & $0111$ & $1/4$ & $4/16$\\ 
$01101011$ & $0111$ & &\\ 
$01111111$ & $0111$ & &\\ \hline 
$01001010$ & $0011$ & $1/8$ & $2/16$\\  
$01011111$ & $0011$ & &\\ \hline
$10000101$ & $1000$ & $1/16$ & $1/16$\\ \hline
$10010111$ & $1001$ & $1/16$ & $1/16$\\ \hline
$10101111$ & $1111$ & &\\
$10111111$ & $1111$ & $1/4$& $4/16$\\
$11101111$ & $1111$ & &\\
$11111111$ & $1111$ & &\\ \hline
$11001111$ & $1011$ & $1/8$ & $2/16$\\
$11011111$ & $1011$ 
\end{tabular}}

\caption{Proof of Theorem~\ref{upnprod}.}\label{t1-upnprod}

\end{table}

Also, if $B$ is a balanced perfect set, then $\mu_{B}(Q)=0$ or $\mu_{Q}(B)=0$. The argument is the same as in the proof of Lemma~\ref{lem-bp}, namely there are two possibilities. If for all $t\in Q$, there exists infinitely many $k\in\omega$ such that $\{t\obc_{4k}\,^\frown s\colon s\in L\}\setminus T_{B}\neq\zbp$, then by Lemma \ref{lem-measure-p-0}, $\mu_{Q}(B)=0$. If it is not the case, there exists $t\in Q$ such that for all but finite $k\in\omega$, we have $\{t\obc_{4k}\,^\frown s\colon s\in L\}\subseteq T_{B}$. It follows that there exists $i\in\omega$, such that $B$ has a branching point of length $j$ for all $j\geq i$, so $s_{j+1}(B)\leq S_{j}(B)+1$, for any $j\geq i$. And since $B$ is a balanced perfect set, it implies, that $s_{j}(B)=S_j(B)$ and $s_{j+1}(B)=s_j(B)+1$ for any $j>i$. In other words, for $w\in T_{B}\cap 2^{i}$, $B\cap [w]=[w]$, and therefore for any $v\in T_B\cap 2^{4k}$ with $3k>i$, there exists $w\in 2^4$, such that $v^\frown w\in T_B\setminus T_Q$. It follows that $\mu_{B}(Q)=0$, by Lemma~\ref{lem-measure-p-0}.

Moreover, if $A$ is $Q$-null, then $A\times 2^{\omega}$ is $P$-null. Indeed, if $n\in\omega$ and $w\in L$, 
\begin{multline*}
\mu_{Q}\left(\{x\in Q\colon x[4n,4n+3]=w\}\right)=\frac{|\{s\in J\colon w=s\left<0,2,4,6\right>\}|}{16}=\\\mu_{P}\left(\pi_1^{-1}\left[\{x\in Q\colon x[4n,4n+3]=w\}\right]\right),
\end{multline*}
(see Table~\ref{t1-upnprod}). Therefore, if $\varepsilon>0$ and $\left<w_i\right>_{i\in\omega}$ is a sequence such that $w_{i}\in T_Q$, $\bigcup_{i\in\omega}[w_i]_Q$ covers $A$ and $\sum_{i\in\omega}\mu_Q\left([w_i]_Q\right)\leq \varepsilon$, then $\mu_P\left(\pi_1^{-1}\left[[w_i]_Q\right]\right)=\mu_Q\left([w_i]_Q\right)$, so $\bigcup_{i\in \omega} \pi_1^{-1}\left[[w_i]_Q\right]$ is a covering of $A\times 2^{\omega}$ of measure $\mu_P$ not greater than $\varepsilon$. 

Let $S\subseteq P$ be a Sierpiński set with respect to $\mu_{P}$, and let $X=\pi_{1}[S]\subseteq Q$. Suppose that $B$ is a balanced perfect set. Then either $\mu_{B}(Q)=0$, so $\mu_{B}(X)=0$, or $\mu_{Q}(B)=0$, so $\mu_{P}(\pi^{-1}_{1}[Q\cap B])=0$. In the latter case, $S\cap \pi^{-1}_{1}[Q\cap B]$ is countable, so $X\cap B$ is countable and $\mu_{B}(X)=0$. Hence $X\in \bPN$. 

Notice also that $\pi_2[P]=Q$ as well (see Table~\ref{t2-upnprod}). So analogously, one can check that $Y=\pi_{2}[S]\in\bPN$.

\begin{table}[h]
\centering

{\footnotesize
\begin{tabular}{c|c}
{\footnotesize $s\in J$} & {\footnotesize $w=s\left<1,3,5,7\right>\in L$} \\
 \hline\hline
$00000000$ & $0000$ \\ \hline
$00010111$ & $0111$ \\
$00111111$ & $0111$ \\ 
$10010111$ & $0111$ \\
$10111111$ & $0111$ \\ \hline
$00101011$ & $0001$ \\ \hline
$01101011$ & $1001$ \\ \hline
$01111111$ & $1111$ \\
$01011111$ & $1111$ \\
$11011111$ & $1111$  \\
$11111111$ & $1111$ \\ \hline
$01001010$ & $1000$ \\ \hline
$10000101$ & $0011$ \\
$10101111$ & $0011$ \\ \hline
$11101111$ & $1011$ \\
$11001111$ & $1011$
\end{tabular}}

\caption{$\pi_2[P]=Q$.}\label{t2-upnprod}

\end{table}

But $S\subseteq X\times Y$, so $X\times Y$ is not $P$-null, and therefore $X\times Y\notin \uPN$. \end{proof}

\begin{xrem}
The above result seems to be interesting as it resembles the argument of Recław (see \cite{ir:ppms}) which proves that if there exists a Lusin set, then the class of perfectly meager sets is not closed under taking products. Recław in his proof actually constructs a perfect set $D\subseteq 2^{\omega}\times 2^{\omega}$ and shows that given a Lusin set $L\subseteq D$, its projections are perfectly meager. The same happens in the above proof where we consider a Sierpiński set and the class $\bPN$. Nevertheless, we still do not know whether it can be done in the case of the class $\PN$.
\end{xrem}

\section{Perfectly null sets in the transitive sense}

\subsection{The definition}

In relation to the algebraic sum of sets belonging to different classes of small subsets of $2^{\omega}$, the class of perfectly sets in the transitive sense ($\PMp$) has been defined in \cite{anmstw:assrnsmzs}. The definition was also motivated by the obvious fact that a set $X$ is perfectly meager if and only if for any perfect set $P$, there exists an $F_{\sigma}$ set $F\supseteq X$ such that $F\cap P$ is meager in $P$. We  say that a set $X$ \emph{perfectly meager in the transitive sense} if for any perfect set $P$, there exists an $F_{\sigma}$ set $F\supseteq X$ such that for any $t$, the set $(F+t)\cap P$ is a meager set relative to $P$. Further properties of $\PMp$ sets were investigated in \cite{an:ratvpms}, \cite{antw:assssspmsr}, \cite{antw:smstuci} and \cite{antw:neqsipmits}, but still there are some open questions related to the properties of this class.

Obviously, a set is perfectly null if and only if for any perfect set $P$, there exists a $G_{\delta}$ set $G\supseteq X$ such that $\mu_{P}(G)=0$. We define the following new class of small sets.

\begin{defin}
We call a set $X$ \emph{perfectly null in the transitive sense} if for any perfect set $P$, there exists a $G_{\delta}$ set $G\supseteq X$ such that for any $t$, the set $(G+t)\cap P$ is $P$-null. The class of sets which are perfectly null in the transitive sense will be denoted by $\PNp$.
\end{defin}

 We do not know whether this class of sets forms a $\sigma$-ideal. 
 
 Similarly we define ideals: $\bPNp,\uPNp$ and $\vPNp$.

\begin{prop} The following sequence of inclusions holds:

\begin{tikzpicture}[description/.style={fill=white,inner sep=2pt}]
\matrix (m) [matrix of math nodes, row sep=3em,
column sep=2em, text height=1.5ex, text depth=0.25ex]
{ \PNp &\subseteq & \bPNp &\subsetneq & \uPNp&\subsetneq & \vPNp \\
\PN & \subseteq &\bPN & \subsetneq &\uPN&\subsetneq & \vPN \\ };
\path[-stealth]
(m-2-1) edge[draw=none] node [sloped] {$\supseteq$} (m-1-1)
(m-2-3) edge[draw=none] node [sloped] {$\supseteq$} (m-1-3)
(m-2-5) edge[draw=none] node [sloped] {$\supseteq$} (m-1-5)
(m-2-7) edge[draw=none] node [sloped] {$\supseteq$} (m-1-7);
\end{tikzpicture}

\end{prop}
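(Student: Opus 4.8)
The plan is to treat the three kinds of arrows in the diagram separately, in each case reducing to facts already available for the non-transitive classes. The horizontal inclusions $\PNp\subseteq\bPNp\subseteq\uPNp\subseteq\vPNp$ are purely formal: since every Silver perfect set is uniformly perfect, every uniformly perfect set is balanced, and every balanced set is perfect, a transitive condition quantified over a larger family of perfect sets is the stronger requirement, so each defining property implies the next. The vertical inclusions $\PNp\subseteq\PN$, $\bPNp\subseteq\bPN$, $\uPNp\subseteq\uPN$, $\vPNp\subseteq\vPN$ I would obtain by instantiating the transitive condition at $t=\zbp$ (the zero of the group $2^\omega$): if $G\supseteq X$ is a $G_\delta$ set with $(G+t)\cap P$ being $P$-null for all $t$, then in particular $G\cap P$, and hence $X\cap P$, is $P$-null. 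The bottom-row chain, including the strict inclusions $\bPN\subsetneq\uPN\subsetneq\vPN$, is already recorded (the inclusions before Lemma~\ref{lem-bp} and the strictness in Proposition~\ref{bpnupn}), so nothing new is needed there.

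The substance of the statement is the strictness of the two top-row inclusions, and the key tool I would isolate first is the translation-invariance of the canonical measure: for every perfect $P$, every $t\in 2^\omega$, and every $\mu_P$-measurable $A$, one has $\mu_{P+t}(A+t)=\mu_P(A)$. This I would verify on the generating sets $[w]_{P+t}$, using that a node $w$ lies on level $i$ in $P+t$ exactly when $w+t\obc_{|w|}$ lies on level $i$ in $P$ (the number of branching points below a node is unchanged by a fixed translation), so both sides equal $1/2^i$. The same observation shows that translation by a fixed $t$ preserves the set of branching tree-lengths, and therefore carries balanced (respectively uniformly, Silver) perfect sets to perfect sets of the very same type.

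With these two facts in hand, both strict inclusions reuse the witnesses from Proposition~\ref{bpnupn}. For $\bPNp\subsetneq\uPNp$ I would take the balanced perfect set $B$ of that proposition, which satisfies $B\in\uPN\setminus\bPN$. Since $B$ is closed it is $G_\delta$, so one may try $G=B$ as a single witness for all translates: for any uniformly perfect $U$ and any $t$, translation-invariance gives $\mu_U\bigl((B+t)\cap U\bigr)=\mu_{U+t}\bigl(B\cap(U+t)\bigr)=\mu_{U+t}(B)$, and this vanishes because $U+t$ is again uniformly perfect and $B\in\uPN$. Hence $B\in\uPNp$, while $B\notin\bPN\supseteq\bPNp$, so $B\in\uPNp\setminus\bPNp$. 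For $\uPNp\subsetneq\vPNp$ I would run the identical argument with the uniformly perfect set $U_0=\{\alpha\in 2^\omega\colon\alpha(2i+1)=\alpha(2i)\text{ for all }i\}$, which satisfies $U_0\in\vPN\setminus\uPN$, replacing uniformly perfect sets by Silver perfect sets throughout: for Silver $S$ and any $t$, $\mu_S\bigl((U_0+t)\cap S\bigr)=\mu_{S+t}(U_0)=0$ since $S+t$ is Silver and $U_0\in\vPN$, giving $U_0\in\vPNp\setminus\uPNp$.

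The main obstacle, and the only step that is not bookkeeping, is the realization that a single closed witness $G=B$ (or $G=U_0$) handles all translates at once; this is exactly what the translation-invariance identity together with the stability of the three families under translation delivers. I expect the identity $\mu_{P+t}(A+t)=\mu_P(A)$ to be the one point deserving care, though it reduces to matching values on the basic clopen-in-$(P+t)$ pieces and so should be routine.
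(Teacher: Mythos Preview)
Your proposal is correct and follows the same approach as the paper: the inclusions are immediate from the definitions, and the strictness on the top row is witnessed by the very same sets $B$ and $U_0$ from Proposition~\ref{bpnupn}. The paper dispatches the last point in a single word (``obviously''), whereas you unpack it via the translation identity $\mu_{P+t}(A+t)=\mu_P(A)$ together with closure of each family (balanced, uniformly, Silver perfect) under translation; this is precisely the content behind the paper's ``obviously'', and your verification of it on the basic clopen pieces is the right one.
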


\begin{proof} The above inclusions follow immediately from the definitions. The sets $B$ and $U$ defined in the proof of Proposition~\ref{bpnupn} are obviously also in $\uPNp\setminus \bPNp$ and $\vPNp\setminus \uPNp$, respectively. \end{proof}

\subsection{$\PNp$ sets and other classes of special subsets}

In \cite{an:ratvpms}, \cite{antw:assssspmsr}, \cite{antw:smstuci} and \cite{antw:neqsipmits}
the authors prove that $\SM\subseteq \PMp\subseteq \UM$, and that it is consistent with ZFC that those inclusions are proper. Therefore, we study the relation between the class $\PNp$ and the classes of strongly null sets and universally null sets.

\begin{thm}
Every strongly null set is perfectly null in the transitive sense.
\end{thm}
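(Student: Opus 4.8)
The plan is to prove the statement by directly verifying the definition of $\PNp$. So fix a strongly null set $X$ and a perfect set $P$; I must produce a $G_\delta$ set $G\supseteq X$ such that $(G+t)\cap P$ is $P$-null for every $t\in 2^\omega$. I would build $G=\bigcap_{m\in\omega}U_m$ as an intersection of open sets, where each $U_m\supseteq X$ is a countable union of basic clopen sets. Since $(G+t)\cap P\subseteq (U_m+t)\cap P$ for every $m$ and every $t$, it suffices to arrange that $\mu_P\big((U_m+t)\cap P\big)\le 2^{-m}$ \emph{simultaneously for all} $t$; then $\mu_P^*\big((G+t)\cap P\big)=0$ for every $t$, as required.

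The key observation I would isolate first is that the canonical measure of a translated basic clopen set can be bounded using only its \emph{length}. Translation by $t$ sends $[w]$ to the basic clopen set $[w+t\obc_{|w|}]$ of the same length, so $([w]+t)\cap P$ is either empty (when the translated sequence leaves $T_P$) or equals $[v]_P$ for some $v\in T_P$ with $|v|=|w|$. Using the level estimate $\mu_P([v]_P)=1/2^{l_P(v)}$ together with the fact that every branch of $P$ has its $i$-th branching point at length at most $S_i(P)$, I would show that $|w|>S_i(P)$ forces $l_P(v)\ge i+1$, hence
\[
\mu_P\big(([w]+t)\cap P\big)\le \frac{1}{2^{i+1}}\qquad\text{for every }t\in 2^\omega .
\]
The whole point is that this bound is \emph{uniform in} $t$: it is exactly this decoupling from the translation that lets a single cover work for all translates at once, and it is why the length thresholds $S_i(P)$ are the right data to track.

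With this in hand I would invoke strong nullity to choose the covers. For fixed $m$, set target lengths $N^m_k=S_{m+k+1}(P)+1$ and apply the definition of $\SN$ with the diameters $\delta^m_k=2^{-(N^m_k-1)}$; this yields basic clopen sets $[w^m_k]$ with $|w^m_k|\ge N^m_k>S_{m+k+1}(P)$ whose union $U_m$ covers $X$. By the uniform estimate above, each piece satisfies $\mu_P\big(([w^m_k]+t)\cap P\big)\le 2^{-(m+k+2)}$, so by countable subadditivity
\[
\mu_P\big((U_m+t)\cap P\big)\le \sum_{k\in\omega}\frac{1}{2^{m+k+2}}=\frac{1}{2^{m+1}}
\]
for every $t$. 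Taking $G=\bigcap_{m\in\omega} U_m$ then finishes the argument.

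The step I expect to require the most care is the length-to-level estimate and the verification that it holds uniformly over all translations $t$; everything afterward is bookkeeping with geometric series. A minor technical point to handle cleanly is the passage from ``diameter $<\delta$'' in the definition of $\SN$ to ``contained in a basic clopen set of length $\ge N$'', which is routine in $2^\omega$ since a set of diameter $<2^{-(N-1)}$ lies in a single $[w]$ with $|w|\ge N$.
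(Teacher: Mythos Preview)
Your proposal is correct and follows essentially the same approach as the paper: both arguments hinge on the translation-invariant estimate that if $|w|>S_i(P)$ then $\mu_P\big(([w]+t)\cap P\big)\le 2^{-(i+1)}$ for every $t$, and then feed in covers of $X$ by sets of the prescribed small diameters. The only cosmetic difference is packaging: the paper invokes the well-known strengthening of $\SN$ giving a single sequence $\langle A_n\rangle$ with $X\subseteq\bigcap_m\bigcup_{n\ge m}A_n$ (so $G$ is built in one shot), whereas you apply the plain $\SN$ definition once for each $m$ to build $U_m$ and then set $G=\bigcap_m U_m$; the resulting $G_\delta$ sets and the measure bounds are the same.
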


\begin{proof} Let $X$ be a strongly null set, and let $P$ be a perfect set. If $w\in T_{P}$ and $|w|=S_{n}(P)+1$, then $\mu_{P}([w]_{P})\leq 1/2^{n+1}$. It is a well-known fact that if a set $A$ is strongly null, we can obtain a sequence of open sets of any given sequence of diamiters, the union of which covers $X$ in such a way that every point of $A$ is covered by infinitely many sets from this sequence (see, e.g. \cite{lb:srl}). Therefore, let $\left<A_{n}\colon n\in\omega\right>$ be a sequence of open sets such that $X\subseteq \bigcap_{m\in\omega}\bigcup_{n\geq m} A_{n}$ and $\diam A_{n}\leq 1/2^{S_n(P)+1}$. Let $t\in 2^{\omega}$ be arbitrary. Let $B_{n}=(A_{n}+t)\cap P$. Since $\diam B_{n}\leq 1/2^{S_n(P)+1}$, we have that $B_{n}\subseteq [w_{n}]_{P}$, where $w_{n}\in T_{P}$ and $|w_{n}|=S_{n}(P)+1$. Therefore, $\mu_{P}(B_{n})\leq 1/2^{n+1}$. But $(X+t)\cap P\subseteq (\bigcap_{m\in\omega}\bigcup_{n\geq m} A_{n}+t)\cap P\subseteq \bigcap_{m\in \omega}\bigcup_{n\geq m} B_{n}$ and $\mu_{P}(\bigcap_{m\in \omega}\bigcup_{n\geq m} B_{n})=0$, so $X$ is perfectly null in the transitive sense. \end{proof} 

The following problem remains open.

\begin{prob}
Does there exist a $\PNp$ set, which is not strongly null?
\end{prob}

In particular, the authors of this paper have not been able to answer the following question.

\begin{prob}
Does there exist an uncountable $\PNp$ set in every model of ZFC?
\end{prob}

Recall that in every model of ZFC there exists an uncountable $\PMp$ set (see \cite{an:ratvpms}).

In \cite{antw:neqsipmits}, the authors prove that $\PMp\subseteq \UM$. One can ask a natural question of whether the following is true.

\begin{prob}
$\PNp\subseteq \UN$?
\end{prob}

If this inclusion holds in $ZFC$, then it is consistent with $ZFC$ that it is proper. Motivated by {\cite[Theorem 1]{ir:sapssr}}, we get the following theorem.

\begin{thm}\label{unpnts}
If there exists a universally null set of cardinality $\cont$, then there exists $Y\in\UN\setminus\bPNp\subseteq \UN\setminus \PNp$.
\end{thm}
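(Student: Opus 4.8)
The plan is to fix one balanced perfect set $P$ and to reformulate $Y\notin\bPNp$ as a covering-avoidance statement. The starting observation is that, viewing $\mu_{P}$ as a measure on $2^{\omega}$, one has $\mu_{P}((G+t)\cap P)=\mu_{P}(G+t)=\int\mathbf{1}_{G}(x+t)\,d\mu_{P}(x)$, since $x\in G+t\iff x+t\in G$. Writing $\mathcal N_{P}=\{G\subseteq 2^{\omega}\colon G\text{ is }G_{\delta}\text{ and }\mu_{P}(G+t)=0\text{ for all }t\}$, a set $Y$ lies in $\bPNp$ only if, for this particular $P$, $Y\subseteq G$ for some $G\in\mathcal N_{P}$. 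Hence it suffices to build a universally null $Y$ contained in no member of $\mathcal N_{P}$ for a well-chosen balanced $P$; since $\PNp\subseteq\bPNp$, such a $Y$ witnesses $Y\in\UN\setminus\bPNp\subseteq\UN\setminus\PNp$.

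Integrating the identity in $t$ and applying Fubini gives $\int\mu_{P}(G+t)\,dm(t)=m(G)$, so every $G\in\mathcal N_{P}$ is Lebesgue null, while a Lebesgue null $G_{\delta}$ need not be transitively $\mu_{P}$-null; thus $\mathcal N_{P}$ is a strictly smaller family, of cardinality at most $\cont$. I would enumerate $\mathcal N_{P}=\{G_{\alpha}\colon\alpha<\cont\}$ and, starting from a universally null $N$ with $|N|=\cont$, construct $Y=\{y_{\alpha}\colon\alpha<\cont\}$ by transfinite recursion so that $y_{\alpha}\notin G_{\alpha}$. To keep $Y$ universally null I would draw the points from a fixed universally null set manufactured from $N$ — for instance realizing $Y$ as the graph of an injection with domain $N$, which is universally null because it meets each vertical line in at most one point and projects injectively onto the universally null set $N$ (a routine disintegration argument reduces any diffused Borel measure to the atomic and diffuse parts of its first marginal).

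The choice of $P$ is not a formality, and here the set $E$ of Lemma~\ref{lem-bp} (or a similarly built balanced perfect set) is essential rather than, say, the balanced subgroup $P=\{x\colon x(2i+1)=0\ \text{for all }i\}$. For that subgroup $\mu_{P}(G+t)$ depends only on a single horizontal section of $G$, so the product $\widetilde N\times 2^{\omega}$ with $\widetilde N\supseteq N$ a null $G_{\delta}$ already lies in $\mathcal N_{P}$ and contains every universally null set built over $N$; no such $P$ can work. Using instead $E$, whose translates $E+t$ are genuinely spread out and for which Lemma~\ref{lem-bp} supplies a dichotomy against balanced sets, is meant precisely to destroy such a universal obstacle.

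The hard part is exactly to guarantee, at stage $\alpha$, that the universally null reservoir from which $y_{\alpha}$ is chosen is not swallowed by $G_{\alpha}$ — equivalently, that no single transitively $\mu_{E}$-null $G_{\delta}$ set absorbs all admissible points. I expect to handle this by the planar method of Recław {\cite[Theorem~1]{ir:sapssr}} alluded to in the remark following Problem~\ref{cru-pro}: build a perfect set $D\subseteq 2^{\omega}\times 2^{\omega}$ and transport $N$ into $D$ by a homeomorphism to obtain a universally null $S\subseteq D$ of size $\cont$, arranged so that its spread across $D$ forbids containment in any member of $\mathcal N_{E}$; the desired $Y$ is then $S$ (or a suitable image), universally null as an image of $N$ and, by construction, escaping every $G_{\alpha}$, so that $E$ witnesses $Y\notin\bPNp$. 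The delicate bookkeeping that simultaneously preserves universal nullity and defeats all continuum-many transitively null $G_{\delta}$-sets is where I expect the real work, and the genuine risk, to lie.
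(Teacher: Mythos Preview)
Your diagnosis of the difficulty is accurate, but you do not close the gap you yourself name, and the tool you reach for (the set $E$ of Lemma~\ref{lem-bp}) is the wrong one. Lemma~\ref{lem-bp} gives a dichotomy $\mu_{B}(E)=0$ or $\mu_{E}(B)=0$ for balanced $B$; this says nothing about translates of $G_{\delta}$ sets and cannot prevent a single $G\in\mathcal N_{E}$ from absorbing your entire reservoir at stage $\alpha$. Your graph idea is fine for preserving universal nullity, but it leaves the reservoir at stage $\alpha$ essentially a vertical fiber, and there is no obstruction to a transitively $\mu_{E}$-null $G_{\delta}$ containing a whole fiber.

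The paper's device dissolves the difficulty by making the reservoir a \emph{translate of the witnessing perfect set itself}. One takes a perfect set $P\subseteq 2^{\omega}$ that is linearly independent over $\mathbb Z_{2}$, splits off disjoint perfect $C,D\subseteq P$ with $D$ balanced, and places the universally null set $X$ of size $\mathfrak c$ inside $C$. Enumerating all $G_{\delta}$ sets as $\langle B_{x}:x\in X\rangle$, one picks $y_{x}\in (x+D)\setminus B_{x}$ whenever that set is nonempty, and any $y_{x}\in x+D$ otherwise. Linear independence makes $+\colon C\times D\to C+D$ a homeomorphism, so $Y=\{y_{x}:x\in X\}$ projects bijectively onto $X$ and is universally null by exactly the graph argument you sketch. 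The ``hard part'' evaporates: if some $G_{\delta}$ set $G=B_{x}\supseteq Y$, then $y_{x}\in B_{x}$, which forces $(x+D)\setminus B_{x}=\varnothing$, i.e.\ $D\subseteq B_{x}+x$, whence $\mu_{D}((B_{x}+x)\cap D)=1$. Thus $D$, not $E$, witnesses $Y\notin\bPNp$. The single idea you are missing is the linearly independent perfect set; once you have it, the self-referential choice of reservoir $x+D$ turns ``$G$ swallows the reservoir'' directly into ``$G$ fails transitive $\mu_{D}$-nullity''.
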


\begin{proof} 
As in \cite{antw:neqsipmits}, we apply the ideas presented in \cite{ir:sapssr} in the case of subsets of $2^{\omega}$. Notice that there exists a perfect set $P\subseteq 2^{\omega}$ which is linearly independent over $\Z_{2}$. Indeed, define $\varp\colon 2^{<\omega}\to 2^{<\omega}$ by induction. Let $\varp(\zbp)=\zbp$. Given $\varp(w)=v\in 2^{<\omega}$ for $w\in 2^{<\omega}$ with $n=|w|$, let $\varp(w^\frown 0)=v^\frown\vare_{2k}^{2^{n+1}}$ and  $\varp(w^\frown 1)=v^\frown\vare_{2k+1}^{2^{n+1}}$, where $\vare_{l}^{m}=0\ldots 010\ldots 0$ is of length $m$ with $1$ on the $l$-th position, and $k\in\omega$ is the natural number binary notation of which is given by $w$. For example, $\varp(0)=10,$ $\varp(1)=01,$ $\varp(00)=101000,$ $\varp(01)=100100,$ $\varp(10)=010010,$ $\varp(11)=010001,$ $\varp(000)=10100010000000$, and so on. Now, notice that $\left<[\varp(w)]\right>_{w\in 2^{<\omega}}$ is a Cantor scheme, so define $P=\bigcup_{\alpha\in 2^{\omega}}\bigcap_{n\in\omega} [\varp(\alpha\obc_{n})]$. Let $\alpha_{1},\ldots,\alpha_{n}\in P$ be pairwise non-equal. There exists $l\in\omega$ such that for any $i,j\leq n, i\neq j$, $\alpha_{i}\obc_{2^{l}-2}\neq \alpha_{j}\obc_{2^{l}-2}$. Then $\alpha_{1},\ldots,\alpha_{n}$ restricted to $[2^{l}-2,2^{l+1}-2)$ are basis vectors of $2^{l}$. Thus, $P$ is linearly independent over $\Z_{2}$. The existence of such a set follows also from Kuratowski-Mycielski Theorem (see \cite[Theorem 19.1]{ak:cdst}).

Next, we follow the argument from \cite{ir:sapssr}. Let $C,D$ be perfect and disjoint subsets of $P$. We can require the set $D$ to be a balanced perfect set. Assume that $X\subseteq C$ is a universally null set and $|X|=\cont$. Let $\left<B_{x}\colon x\in X\right>$ enumerate all $G_{\delta}$ sets. For every $x\in X$, let $y_{x}\in x+D$ be such that $y_{x}\notin B_{x}$ if only $(D+x)\setminus B_{x}\neq\zbp$. Otherwise, choose any $y_{x}\in x+D$. Put $Y=\{y_{x}\colon x\in X\}$.

Notice that $+\colon C\times D\to C+D$ is a homeomorphism. Obviously, $+$ is continuous and open on $C\times D$. Since $(C+C)\cap (D+D)=\{0\}$ (because $P$ is linearly independent), we have that $+$ is one-to-one. Since $\pi_{1}\left[+^{-1}[Y]\right]=\pi_{1}\left[\{\left<x,d_{x}\right>\colon x+d_{x}=y_{x}\land x\in X\}\right]=X$ is universally null, $Y$ is universally null as well.

Now, we prove that $Y$ is not perfectly null in the transitive sense. Indeed, if $B_{x}\supseteq Y$ is a $G_{\delta}$ set, then $y_{x}\in B_{x}$, so $(D+x)\setminus B_{x}=\zbp$ and $D\cap (B_{x}+x)=D$. Therefore, $\mu_{D}(D\cap (B_{x}+x))=1$. \end{proof}

Recall that $\non(\NN)$ is the cardinal coefficient which denotes the minimal possible cardinality of a subset of the real line which is not of Lebesgue measure zero (see e.g. \cite{lb:srl}).

\begin{cor}
If $\non(\NN)=\cont$, then $\PNp\neq \UN$.
\end{cor}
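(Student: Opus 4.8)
The plan is to deduce this directly from Theorem~\ref{unpnts}, whose single hypothesis is the existence of a universally null set of cardinality $\cont$. Thus the entire task reduces to producing, under the assumption $\non(\NN)=\cont$, a universally null set of size exactly $\cont$.

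The key external input is the classical theorem of Grzegorek asserting that in ZFC there exists a universally null set of cardinality $\non(\NN)$. First I would invoke this result to fix a universally null set $X$ with $|X|=\non(\NN)$. The hypothesis $\non(\NN)=\cont$ then gives $|X|=\cont$, so $X$ witnesses the assumption of Theorem~\ref{unpnts}. Applying that theorem yields a set $Y\in\UN\setminus\bPNp\subseteq\UN\setminus\PNp$; in particular $\UN\not\subseteq\PNp$, so the two classes are distinct, that is, $\PNp\neq\UN$.

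It is worth recording why the naive approach does not suffice and Grzegorek's theorem is genuinely needed. Every set $A$ of cardinality strictly below $\non(\NN)$ is already universally null: for any diffused Borel measure $\mu$ there is, by the isomorphism theorem, a Borel isomorphism $f$ of $2^{\omega}$ carrying $\mu$ to the Lebesgue measure $m$, and since Borel isomorphisms preserve cardinality, $|f[A]|<\non(\NN)$ forces $m^{*}(f[A])=0$, whence $\mu^{*}(A)=0$. This, however, only produces universally null sets of size $<\non(\NN)=\cont$, whereas Theorem~\ref{unpnts} demands one of size precisely $\cont$. Consequently the essential (and essentially the only) obstacle is the existence of a universally null set of size exactly $\non(\NN)$, which is exactly the content of Grzegorek's theorem; once it is granted, the corollary follows in one line.
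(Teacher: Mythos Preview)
Your proposal is correct and follows essentially the same approach as the paper: both reduce the corollary to Theorem~\ref{unpnts} by producing a universally null set of cardinality $\cont$, and both obtain that set from Grzegorek's theorem (which the paper cites as \cite[Theorem~8.8]{lb:srl}). Your additional paragraph explaining why the naive small-cardinality argument fails is correct exposition but not part of the paper's proof.
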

\begin{proof}
If $\non(\NN)=\cont$, then there exists a universally null set of cardinality $\cont$ (see \cite[Theorem 8.8]{lb:srl}). 
\end{proof}

Taking into account Proposition~\ref{un-is-pn}, we have the following.

\begin{cor}
If $\non(\NN)=\cont$,  $\PNp\neq \PN$.
\end{cor}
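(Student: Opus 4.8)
The plan is to derive this corollary purely from the two results established immediately before it, so essentially no new mathematical work is required. The statement to prove is that if $\non(\NN)=\cont$, then $\PNp\neq\PN$. I would read this as a two-line syllogism: under the hypothesis $\non(\NN)=\cont$, the previous corollary already gives $\PNp\neq\UN$, and Proposition~\ref{un-is-pn} gives $\UN\subseteq\PN$; chaining these should force $\PNp$ and $\PN$ apart.

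First I would unpack what the previous corollary actually produces. Tracing back, $\non(\NN)=\cont$ yields a universally null set of cardinality $\cont$ (by \cite[Theorem 8.8]{lb:srl}), and then Theorem~\ref{unpnts} manufactures a set $Y\in\UN\setminus\bPNp\subseteq\UN\setminus\PNp$. So the real content available to me is a concrete witness $Y$ with $Y\in\UN$ but $Y\notin\PNp$. This is exactly the object I want to exploit.

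The key step is then the following short chain of inclusions. Since $Y\in\UN$ and $\UN\subseteq\PN$ by Proposition~\ref{un-is-pn}, we have $Y\in\PN$. On the other hand $Y\notin\PNp$. Hence $Y$ is a member of $\PN$ that does not belong to $\PNp$, so the two classes cannot coincide: $\PN\neq\PNp$. I would write this out in one or two sentences, explicitly naming $Y$ as the separating set and citing Theorem~\ref{unpnts} together with Proposition~\ref{un-is-pn}. A clean rendering would be: under $\non(\NN)=\cont$, Theorem~\ref{unpnts} gives $Y\in\UN\setminus\PNp$; by Proposition~\ref{un-is-pn} this $Y$ lies in $\PN$, so $Y\in\PN\setminus\PNp$, whence $\PNp\neq\PN$.

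There is no genuine obstacle here: the corollary is a formal consequence of results already in hand, and the only thing to be careful about is citing the right earlier statements rather than re-deriving the universally null witness. If anything, the one point worth a moment's attention is confirming the direction of the inclusion $\UN\subseteq\PN$ (so that a $\UN$ set automatically lands in $\PN$), which is precisely what Proposition~\ref{un-is-pn} supplies; with that in place the argument is immediate.
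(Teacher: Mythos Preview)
Your proposal is correct and matches the paper's approach exactly: the paper simply notes ``Taking into account Proposition~\ref{un-is-pn}'' and leaves the proof as a $\square$, which is precisely the chain $Y\in\UN\setminus\PNp$ together with $\UN\subseteq\PN$ that you spell out.
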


\hfill $\square$

The class of perfectly meager sets in the transitive sense is closed under taking products (see \cite{antw:neqsipmits}). We do not know whether this holds for $\PNp$ sets.

\begin{prob}
Let $X,Y\in \PNp$. Is it always true that $X\times Y\in \PNp$?
\end{prob}

The answer is in the positive for $\vPNp$ sets.

\begin{prop}
Let $X,Y\in \vPNp$. Then $X\times Y\in \vPNp$.
\end{prop}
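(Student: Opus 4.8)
The plan is to adapt the structure of the proof of Proposition~\ref{vpnprod} (the non-transitive product result for Silver sets) and to combine it with the translation-invariance built into the definition of $\vPNp$. Fix a Silver perfect set $P\subseteq 2^\omega\times 2^\omega$. As in the proof of Proposition~\ref{vpnprod}, decompose $P$ using its defining sequence $\left<a_n\right>_{n\in\omega}$ into $P=P_1\times P_2$, where $P_1=[T_1]$ is governed by the even-indexed entries $a_{2n}$ and $P_2=[T_2]$ by the odd-indexed entries $a_{2n+1}$. Each of $P_1,P_2$ is either a Silver perfect set or a finite set, according to whether the corresponding index set $\{2n:a_n=-1\}$ (resp.\ $\{2n+1:a_n=-1\}$) is infinite or finite.

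First I would handle the main case, where both $P_1$ and $P_2$ are Silver perfect. Since $X\in\vPNp$, there is a $G_\delta$ set $G_1\supseteq X$ witnessing the transitive Silver-null property for $P_1$; similarly choose $G_2\supseteq Y$ for $P_2$. The natural candidate witness for $X\times Y$ is $G=G_1\times G_2$ (transported to $2^\omega$ via the homeomorphism $h$), which is again $G_\delta$ and contains $X\times Y$. The point is that for an arbitrary translation parameter $t=(t_1,t_2)$ one has $(G+t)\cap P=\bigl((G_1+t_1)\cap P_1\bigr)\times\bigl((G_2+t_2)\cap P_2\bigr)$, because the group operation on $2^\omega\times 2^\omega$ acts coordinatewise under $h$. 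By the choice of $G_1$, the first factor is $P_1$-null; Lemma~\ref{perf-prod} (more precisely its ``in particular'' clause, applied after noting $\mu_{P}=\mu_{P_1}\times\mu_{P_2}$) then gives $\mu_P\bigl((G+t)\cap P\bigr)=0$ for every $t$, which is exactly what the definition of $\vPNp$ requires.

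The remaining case is when exactly one of $P_1,P_2$ is finite; say $P_2$ is finite, so $P=\bigcup_{s\in P_2}P_1\times\{s\}$ is a finite union. Here I would take the witness $G=G_1\times 2^\omega$ for $X\times Y$, using only the witness $G_1\supseteq X$ for $P_1$. For any $t=(t_1,t_2)$ and any fixed $s\in P_2$, the slice $(G+t)\cap(P_1\times\{s\})$ is $P_1$-null (it is a copy of $(G_1+t_1)\cap P_1$), and since the canonical measure on a singleton-indexed slice agrees with $\mu_{P_1}$, each slice has $\mu_{P_1\times\{s\}}$-measure zero. Corollary~\ref{mp-podperfect}, applied to the finite perfect decomposition of $P$ into the sets $P_1\times\{s\}$, then yields $\mu_P\bigl((G+t)\cap P\bigr)=0$.

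The main obstacle, and the step deserving the most care, is verifying that translation and the coordinatewise product genuinely commute through the homeomorphism $h$: one must check that $h$ respects the $\Z_2$-vector-space structure so that translating $X\times Y$ by $t$ corresponds precisely to translating each factor by the respective coordinate of $t$, and that $h^{-1}[P]$ really factors as $P_1\times P_2$ compatibly with the product measure identity of Lemma~\ref{perf-prod}. Once this bookkeeping is in place, the two cases above close the argument; the only genuinely new ingredient beyond Proposition~\ref{vpnprod} is tracking the translation parameter, and that is exactly where the coordinatewise factorization does all the work.
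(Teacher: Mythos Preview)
Your proposal is correct and follows exactly the approach the paper intends: the paper's proof is the single line ``Follows easily from the proof of Proposition~\ref{vpnprod},'' and you have accurately spelled out the details, including the key observation that the interleaving homeomorphism $h$ is a $\Z_2$-linear isomorphism so that translations factor coordinatewise and the witnesses $G_1,G_2$ (or $G_1\times 2^\omega$ in the degenerate case) can be chosen once for $P_1,P_2$ and then work uniformly for every translate.
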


\begin{proof} Follows easily from the proof of Proposition~\ref{vpnprod}. \end{proof} 

\subsection{Additive properties of $\PNp$ sets}

We conclude this paper by investigating some additive properties of the class of sets perfectly null in the transitive sense. 

\begin{prop}\label{Avare}
Let $A\subseteq 2^{\omega}$ be open, $\mu$ be any Borel diffused measure on $2^\omega$ and $0\leq\vare<1$. Then the set $A_{\vare}=\{t\in 2^{\omega}\colon \mu(A+t)>\vare\}$ is also open.
\end{prop}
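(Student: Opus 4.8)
The plan is to deduce openness of $A_{\vare}$ from the lower semicontinuity of the translation‑measure function $t\mapsto\mu(A+t)$: since $A_{\vare}$ is exactly the preimage of $(\vare,1]$ under this function, lower semicontinuity is all that is needed. Concretely, I would argue directly, producing around each point of $A_{\vare}$ a basic clopen neighbourhood contained in $A_{\vare}$.

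The one elementary fact driving everything is that translation acts on basic clopen sets by a coordinatewise shift of the generating word: for $w\in 2^{k}$ and $t\in 2^{\omega}$ one has $[w]+t=[w+t\obc_{k}]$. Indeed, if $y=x+t$ with $x\obc_{k}=w$, then $y\obc_{k}=x\obc_{k}+t\obc_{k}=w+t\obc_{k}$ while the coordinates past $k$ still range freely, and conversely $y\obc_{k}=w+t\obc_{k}$ forces $x=y+t$ to satisfy $x\obc_{k}=w$. In particular $[w]+t$, and hence $\mu([w]+t)$, depends only on the finitely many coordinates $t\obc_{k}$, so $t\mapsto\mu([w]+t)$ is locally constant and thus continuous.

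Now I would fix $t_{0}\in A_{\vare}$, so $\mu(A+t_{0})>\vare$, and write the open set $A$ as $\bigcup_{n\in\omega}[w_{n}]$ with $w_{n}\in 2^{<\omega}$. Because the finite unions $\bigcup_{n<N}\bigl([w_{n}]+t_{0}\bigr)$ increase to $A+t_{0}$, continuity of $\mu$ from below yields an $N$ with $\mu\bigl(\bigcup_{n<N}([w_{n}]+t_{0})\bigr)>\vare$. Set $M=\max_{n<N}|w_{n}|$. For every $t$ with $t\obc_{M}=t_{0}\obc_{M}$ and every $n<N$ we have $t\obc_{|w_{n}|}=t_{0}\obc_{|w_{n}|}$, so the shift identity gives $[w_{n}]+t=[w_{n}]+t_{0}$; hence $\bigcup_{n<N}([w_{n}]+t)=\bigcup_{n<N}([w_{n}]+t_{0})$ and therefore $\mu(A+t)\geq\mu\bigl(\bigcup_{n<N}([w_{n}]+t_{0})\bigr)>\vare$. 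Thus the basic clopen neighbourhood $[t_{0}\obc_{M}]$ of $t_{0}$ lies in $A_{\vare}$, and $A_{\vare}$ is open.

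I do not expect a serious obstacle: the whole content is the translation identity $[w]+t=[w+t\obc_{|w|}]$ together with the reduction to finitely many generators via continuity of $\mu$ from below. The only points requiring care are getting the coordinatewise bookkeeping in that identity correct and ensuring $M$ is chosen large enough to absorb all of $w_{0},\dots,w_{N-1}$. Equivalently, one may package the argument as: $t\mapsto\mu(A+t)$ is a supremum of the continuous functions $t\mapsto\mu\bigl(\bigcup_{n<N}([w_{n}]+t)\bigr)$, hence lower semicontinuous, from which openness of $A_{\vare}$ is immediate.
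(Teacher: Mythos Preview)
Your argument is correct and is essentially the same as the paper's: write $A$ as a countable union of basic clopen sets, use continuity from below to find finitely many of them whose translate by $t_{0}$ already has measure exceeding $\vare$, and then observe that translates of those finitely many clopens are unchanged when $t$ agrees with $t_{0}$ on the first $M$ coordinates. Your explicit justification of the identity $[w]+t=[w+t\obc_{|w|}]$ and the lower-semicontinuity framing are nice additions, but the underlying proof is identical to the paper's.
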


\begin{proof} Assume that $A$ is open, and let $A=\bigcup_{n\in\omega} [s_{n}]$. If $A_\vare=\zbp$, it is obviously open. Otherwise, let $t_{0}\in A_{\vare}$. There exists $N\in\omega$ such that $\mu(\bigcup_{n\leq N} [s_{n}]+t_{0})>\vare$. Let $M=\max\{|s_{n}|\colon n\leq N\}$. For any $t$ such that $t\obc_{M}=t_{0}\obc_{M}$, $\mu(A+t)\geq\mu(\bigcup_{n\leq N} [s_{n}]+t)=\mu(\bigcup_{n\leq N} [s_{n}]+t_{0})>\vare$. So $A_{\vare}$ is open.
\end{proof}

Recall that a set $A$ is called null-additive ($A\in \NN^{*}$) if for any null set $X$, $A+X$ is  null. Suppose that $\leq^*$ denotes the standard dominating order on $\omega^\omega$. 

\begin{lem}\label{srn}
Let $\mu$ be a Borel diffused measure on $2^\omega$ and $G\subseteq 2^\omega$ be a $G_{\delta}$ set. Let $Y\in\NN^{*}$ be such that for every Borel map $\varp\colon Y\to \omega^\omega$, there exists $\alpha\in \omega^{\omega}$ such that for every $y\in Y$, $\varp(y)\leq^{*}\alpha$. Moreover, assume that for all $y\in Y$, $\mu(G+y)=0$. Then $\mu(G+Y)=0$.
\end{lem}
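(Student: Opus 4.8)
The plan is to write $G=\bigcap_{n\in\omega} U_n$ with each $U_n$ open and $U_0\supseteq U_1\supseteq\cdots$, and then to exploit the hypothesis on $Y$ to produce, for a fixed target error, a single "fast enough" cover of $G+Y$. The starting observation is that for each fixed $y\in Y$ we have $\mu(G+y)=0$, so $\mu(U_n+y)\to 0$ as $n\to\infty$ (using $\mu(G+y)=\inf_n \mu(U_n+y)$ by continuity of measure from above, since $U_0+y$ has finite measure). The point of Proposition~\ref{Avare} is that the sets on which $\mu(U_n+t)$ exceeds a threshold are open, which will let me turn pointwise convergence into a Borel assignment.

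The key step is to define, for each $k\in\omega$, a Borel map encoding how fast the measures $\mu(U_n+y)$ decay. Concretely, for each $y\in Y$ let $\varp(y)(k)=\min\{n\colon \mu(U_n+y)<1/2^{k}\}$; this is well-defined because $\mu(U_n+y)\to 0$, and it is Borel in $y$ because each condition $\mu(U_n+y)<1/2^k$ defines a Borel (indeed, by Proposition~\ref{Avare}, open-complement) set of $y$'s. By the hypothesis on $Y$, there is a single $\alpha\in\omega^\omega$ with $\varp(y)\le^{*}\alpha$ for all $y\in Y$. After discarding the finitely many exceptional coordinates (or passing to a tail, which does not affect the measure-zero conclusion), I get that for every $y\in Y$ and every sufficiently large $k$, $\mu(U_{\alpha(k)}+y)<1/2^{k}$. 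Thus the single open set $V_k=U_{\alpha(k)}$ satisfies $\mu(V_k+y)<1/2^k$ \emph{uniformly} in $y\in Y$, and $G+y\subseteq V_k+y$ for every $k$.

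Now I bring in null-additivity. For each $k$, consider $W_k=\bigcup_{y\in Y}(V_k+y)=V_k+Y$. The set $G+Y$ is contained in $\bigcap_k W_k$ (indeed $G+Y\subseteq V_k+Y=W_k$ for every $k$), so it suffices to show $\mu(\bigcap_k W_k)=0$, and for this it is enough to bound $\mu(W_k)$. Here is where I expect the main obstacle: controlling the measure of the union $V_k+Y$ over all $y\in Y$ simultaneously. The naive bound $\mu(V_k+Y)\le\sum_{y}\mu(V_k+y)$ is useless since $|Y|$ may be $\cont$. The resolution should come from $Y\in\NN^{*}$ together with the uniform smallness $\mu(V_k+y)<1/2^k$: null-additivity says $V_k+Y$ stays null \emph{provided} $V_k$ itself is chosen null, which it is not, so the argument must instead be organized around the Fubini/translation structure of $\mu$ and the combinatorial strength of $Y$.

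The cleanest way I would try to close this is to avoid bounding $\mu(W_k)$ crudely and instead argue that $G+Y$ meets the requirements to be covered by a decreasing sequence whose measures tend to $0$: fix $\vare>0$, use the uniform bound to replace each $V_k+y$ by a cover of $\mu$-measure $<1/2^k$, and then invoke null-additivity of $Y$ in the form that a $Y$-indexed family of uniformly small translates of a fixed null-like cover remains small. In practice I would encode the covers of the $V_k+y$ as a Borel function of $y$ into $\omega^\omega$ (a code for the dyadic cover), dominate it by a single $\alpha'$ using the hypothesis again, and thereby exhibit an $\vare$-cover of $G+Y$ that does not depend on the individual $y$; letting $\vare\to 0$ gives $\mu(G+Y)=0$. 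The crux, and the step I would scrutinize most carefully, is verifying that the domination property of $Y$ and the property $Y\in\NN^{*}$ combine to give a \emph{single} Borel cover-code valid across all of $Y$ — this is exactly the strengthened "$\vare$-covers that work simultaneously" phenomenon, and getting the quantifiers right between "for each $y$" and "uniformly in $y$" is where the proof genuinely lives.
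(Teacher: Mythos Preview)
Your setup is right and matches the paper: write $G=\bigcap_m G_m$ with the $G_m$ open and decreasing, define a Borel map $\varp\colon Y\to\omega^\omega$ recording how quickly the measures $\mu(G_m+y)$ drop below a prescribed threshold, and dominate all the $\varp(y)$ by a single $\alpha$. But from this point on your proposal does not close, and the gap is genuine.

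The missing idea is that null-additivity must be used in its \emph{combinatorial} form. The paper invokes Shelah's characterization (\cite[Theorem 2.7.18(3)]{tbhj:stsrl}): $Y\in\NN^{*}$ iff for every increasing $F\colon\omega\to\omega$ there are sets $I_q\subseteq 2^{[F(q),F(q+1))}$ with $|I_q|\le q$ and $Y\subseteq\bigcup_r\bigcap_{q\ge r}[I_q]$. Applying this with $F=\alpha$ means that on each block $[\alpha(q),\alpha(q+1))$ the elements of $Y$ take at most $q$ values, so the initial segments $y\obc_{\alpha(q+1)}$ as $y$ ranges over (a suitable piece of) $Y$ number at most $C\cdot q!$. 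Since a translate $[w]+y$ depends only on $y\obc_{|w|}$, this converts the uncountable union over $y\in Y$ into a union over at most $C\cdot q!$ terms. This is also why the paper's threshold is $1/(2^{k+1}k!)$ rather than $1/2^k$: the extra $k!$ is there precisely to absorb that multiplicity.

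Your two suggested closing strategies do not substitute for this. The phrase ``null-additivity in the form that a $Y$-indexed family of uniformly small translates of a fixed null-like cover remains small'' is not a consequence of $Y\in\NN^{*}$: null-additivity as stated only controls $N+Y$ for $N$ actually null, and your $V_k$ are open sets of positive measure. The alternative plan---encode covers as elements of $\omega^\omega$ and dominate again---also fails: domination in $\omega^\omega$ controls only the \emph{rate} of a cover, not \emph{which} basic clopen sets are used, so a common bound $\alpha'$ does not yield a single cover valid for all $y$. What you need is a bound on how many \emph{distinct} translates $y$ can produce at each scale, and that is exactly what Shelah's characterization delivers.
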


\begin{proof}
Let $G=\bigcap_{m\in\omega}G_m$, where for any $m\in\omega$, $G_m$ is open and $G_{m+1}\subseteq G_m$. For $m\in\omega$, let $G_m=\bigcup_{i\in\omega}[w_{i,m}]$, with $w_{i,m}\in 2^{<\omega}$, $|w_{i,m}|>m$, and for $i\neq j$, $[w_{i,m}]\cap[w_{j,m}]=\zbp$. Let $F_n=\{w_{i,m}\colon i,m\in\omega\land |w_{i,m}|=n\}\subseteq 2^n$. Notice that 
\[G=\bigcap_{m\in\omega}\bigcup_{n\geq m} [F_{n}].\]
Let $\varphi\colon Y\to \omega^\omega$ be a function defined as follows:
\[\varphi(y)(k)=\min\left\{i\in\omega\colon \mu\left(\bigcup_{n\geq i}[F_n+y\obc_n]\right)\leq\frac{1}{2^{k+1}\cdot k!}\right\}\]
Notice that $\varphi$ is well defined, as $\mu(G+y)=0$ for any $y\in Y$. By Proposition \ref{Avare}, the set
\[\varphi^{-1}\left[\{\gamma\in\omega^\omega \colon \gamma(k)>i\}\right]=\left\{y\in Y\colon \mu\left(\bigcup_{n\geq i} [F_n]+y\right)>\frac{1}{2^{k+1}\cdot k!}\right\}\]
 is open for any $i,k\in\omega$, and therefore $\varphi$ is Borel, so there exists strictly increasing $\alpha\in\omega^\omega$ such that for every $y\in Y$, $\varp(y)\leq^{*}\alpha$. For $p\in\omega$, set $Y_{p}=\{y\in Y\colon \forall_{k\geq p} \varp(y)(k)\leq\alpha(k)\}$. 

Recall now the characterization of a null-additive set due to S. Shelah (see \cite[Theorem 2.7.18(3)]{tbhj:stsrl}). $A\in \NN^{*}$ if and only if for any increasing function $F\colon\omega\to\omega$, there exists a sequence $\left<I_{q} \right>_{q\in\omega}$, such that for $q\in\omega$, $I_{q}\subseteq 2^{[F(q),F(q+1))}$, $|I_{q}|\leq q$ and $A\subseteq \bigcup_{r\in\omega}\bigcap_{q\geq r} [I_{q}]$. 

Set $p\in\omega$, and apply the above characterization for $Y_{p}$ and the function $\alpha$. There exists a sequence $\left<I^p_{q} \right>_{q\in\omega}$, such that for $q\in\omega$, $I^p_{q}\subseteq 2^{[\alpha(q),\alpha(q+1))}$, $|I^p_{q}|\leq q$ and $Y_p\subseteq \bigcup_{r\in\omega}\bigcap_{q\geq r} [I^p_{q}]$. For $r\in\omega$, let $Y_{p,r}=Y\cap\bigcap_{q\geq r} [I^p_{q}]$.

Therefore, $Y_{p}=\bigcup_{r\in\omega} Y_{p,r}$. For any $q>r$, put $K_{p,q,r}=\{y\obc_{\alpha(q+1)}\colon y\in Y_{p,r}\}$. Notice that $K_{p,q,r}$ has at most 
\[\left|2^{\alpha(r)}\right|\prod_{n=r}^q\left|I^p_n\right|=2^{\alpha(r)}\prod_{n=r}^q n\leq 2^{\alpha(r)}\cdot q!\]
 elements. 

Obviously, $Y=\bigcup_{p,r\in\omega} Y_{p,r}$, so it is sufficient to prove that $\mu(G+Y_{p,r})=0$ for any $p,r\in\omega$. Notice that for $p,r\in\omega$,
\begin{multline*}
G+Y_{p,r}=\bigcup_{y\in Y_{p,r}} G+y=\bigcup_{y\in Y_{p,r}}\bigcap_{m\in\omega}\bigcup_{n\geq m}[F_n+y\obc_{n}]\subseteq \bigcap_{m\in\omega}\bigcup_{\substack{y\in Y_{p,r}\\ n\geq m}} [F_n+y\obc_{n}]\\
=\bigcap_{m\in\omega}\,\bigcup_{\substack{y\in Y_{p,r}\\ q\geq m}}\,\bigcup_{\alpha(q)\leq n<\alpha(q+1)} [F_{n}+y\obc_{n}]\subseteq \bigcap_{m\geq p}\,\bigcup_{q\geq m}\,\bigcup_{\substack{\alpha(q)\leq n<\alpha(q+1)\\w\in K_{p,q,r}}} [F_{n}+w\obc_n].
\end{multline*}
Recall that if $w\in K_{p,q,r}$, then $w=y\obc_{\alpha(q+1)}$ for some $y\in Y_{p,r}\subseteq Y_p$, thus for any $k\geq p$, $\alpha(k)\geq\varp(y)(k)$, so $\mu(\bigcup_{n\geq \alpha(k)}[F_n+y\obc_n])\leq 1/(2^{k+1}\cdot k!)$. In particular, $\mu(\bigcup_{n\geq \alpha(q)}[F_n+y\obc_n])\leq 1/(2^{q+1}\cdot q!)$, so 
\[
\mu\left(\bigcup_{q\geq m}\,\bigcup_{\substack{\alpha(q)\leq n<\alpha(q+1)\\w\in K_{p,q,r}}} [F_{n}+w\obc_n]\right)\leq 2^{\alpha(r)}\cdot\sum_{q\geq m}\frac{q!}{2^{q+1}q!}=\frac{2^{\alpha(r)}}{2^m}.
\]
Therefore, $\mu(G+Y_{p,r})\leq 2^{\alpha(r)}/2^m$ for any $m\in\omega$, so $\mu(G+Y_{p,r})=0$, for any $p,r\in\omega$.
\end{proof}

We say that a set $Y$ is $SR^{\NN}$ (see \cite{tbhj:bisr}) if for every Borel set $H\subseteq 2^{\omega}\times 2^{\omega}$ such that $H_{x}=\{y\in 2^{\omega}\colon\left<x,y\right>\in H\}$ is null for any $x\in 2^{\omega}$, $\bigcup_{x\in Y}H_{x}$ is null as well.

\begin{thm}
Let $X\in\PNp$, and let $Y$ be an $SR^{\NN}$ set. Then $X+Y\in\PN$.
\end{thm}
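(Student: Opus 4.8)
The plan is to show that $X+Y$ is perfectly null, i.e. that $\mu_P(X+Y)=0$ for every perfect set $P$. Fix such a $P$. Since $X\in\PNp$, there is a $G_\delta$ set $G\supseteq X$ such that for every $t\in 2^\omega$ the set $(G+t)\cap P$ is $P$-null; equivalently, writing $\mu_P$ as a measure on all of $2^\omega$ (as in the Remark following the definition of the canonical measure), we have $\mu_P(G+t)=0$ for every $t$. Since $X+Y\subseteq G+Y$, it suffices to prove $\mu_P(G+Y)=0$. So the whole problem reduces to an application of Lemma~\ref{srn} with the measure $\mu=\mu_P$, provided I can verify its two hypotheses for the set $Y$: first, that $Y\in\NN^*$, and second, the domination property that every Borel map $\varp\colon Y\to\omega^\omega$ is dominated by a single $\alpha\in\omega^\omega$ (an ``$\leq^*$-bounded image'' property).

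First I would record that $\mu_P(G+y)=0$ for each $y\in Y$, which is immediate from $y\in 2^\omega$ and the transitive-null property of $G$; this is precisely the last standing hypothesis of Lemma~\ref{srn}. The remaining work is to extract both hypotheses on $Y$ from the assumption that $Y$ is $SR^{\NN}$. For null-additivity: given any null set $Z$, consider the Borel set $H\subseteq 2^\omega\times2^\omega$ whose sections are $H_x=Z+x$ (translation is a homeomorphism, so each $H_x$ is null and $H$ is Borel); then $Y+Z=\bigcup_{x\in Y}H_x$ is null by the $SR^{\NN}$ property, giving $Y\in\NN^*$. The domination property should follow in the same spirit: I would encode a Borel map $\varp\colon Y\to\omega^\omega$ by a Borel set $H$ designed so that a section $H_x$ fails to be null exactly when $\varp(x)$ eventually dominates too fast, and use the $SR^{\NN}$ conclusion to force a uniform bound $\alpha$. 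Concretely, I expect to build $H$ so that $H_x$ is (say) the complement of a carefully chosen full-measure set unless the values of $\varp(x)$ respect a prescribed growth, turning non-domination of $\{\varp(y):y\in Y\}$ into a non-null union.

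The main obstacle will be this second step: manufacturing a Borel set $H$ with null sections whose $SR^{\NN}$-union being null is genuinely equivalent to (or at least implies) the $\leq^*$-boundedness of $\{\varp(y):y\in Y\}$. The delicate point is that $SR^{\NN}$ is a statement about \emph{null} sections, whereas $\leq^*$-domination is a combinatorial statement on $\omega^\omega$, so I must translate ``$\varp(y)$ is large'' into ``$H_y$ is large'' in a measure-theoretic way while keeping every individual section null. A natural device is to fix, for each $n$, a family of clopen sets indexed by $\omega$ with measures summing to a convergent series, and let $H_x$ consist of points selected according to the values $\varp(x)(n)$; one arranges that if the image of $\varp$ were unbounded then infinitely many sections would conspire to cover a positive-measure set, contradicting $SR^{\NN}$. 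Once both hypotheses of Lemma~\ref{srn} are in hand, the conclusion $\mu_P(G+Y)=0$ is immediate, and since $P$ was arbitrary we obtain $X+Y\in\PN$, completing the proof.
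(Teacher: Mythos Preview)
Your overall strategy coincides with the paper's: fix a perfect $P$, take the $G_\delta$ witness $G$ from $X\in\PNp$, and apply Lemma~\ref{srn} with $\mu=\mu_P$ to conclude $\mu_P(X+Y)\le\mu_P(G+Y)=0$.

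The only divergence is in how the two hypotheses on $Y$ in Lemma~\ref{srn} are discharged. The paper simply cites \cite[Theorem~3.8]{tbhj:bisr}: every $SR^{\NN}$ set lies in $\NN^{*}$ and has every Borel image in $\omega^\omega$ bounded under $\leq^{*}$. You attempt to rederive both facts. Your argument for $Y\in\NN^{*}$ (take $H_x=Z+x$) is correct and is the standard one-line proof. Your treatment of the domination property, however, is incomplete and in one place confused: the sentence about ``a section $H_x$ fails to be null exactly when $\varp(x)$ eventually dominates too fast'' cannot be right, since the $SR^{\NN}$ hypothesis applies only when \emph{all} sections are null. Your later idea---build null sections $H_y$ coded by $\varp(y)$ so that an unbounded image forces a non-null union---is the correct shape, but actually producing such an $H$ is not routine: it amounts to exhibiting the Tukey map witnessing the ZFC inequality $\mathrm{add}(\NN)\le\mathfrak{b}$, which is a genuine (if classical) piece of work. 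Rather than reconstruct it here, cite the Bartoszy\'nski--Judah theorem as the paper does; once you do, your proof is complete and identical to the paper's.
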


\begin{proof}
This theorem is an easy consequence of Lemma \ref{srn}. Indeed, by \cite[Theorem 3.8]{tbhj:bisr} if $Y$ is an $SR^{\NN}$ set, then $Y\in \NN^{*}$ and every Borel image of $Y$ into $\omega^{\omega}$ is bounded. Let $P$ be perfect. Apply Lemma \ref{srn} to measure $\mu_P$, the set $Y$ and a $G_{\delta}$ set $G$ such that $X\subseteq G$, and for all $t\in 2^{\omega}$, $\mu_P(G+t)=0$.
\end{proof}

In \cite{anmstw:assrnsmzs}, the authors prove that $\SN + \PMp\subseteq s_{0}$. The question of whether the measure analogue is true remains open.

\begin{prob}
$\SM +\PNp\subseteq s_{0}$?
\end{prob}

\begin{xrem}
Notice that a weaker statement which says that the algebraic sum of a Sierpiński set and a $\PNp$ set is an $s_{0}$-set holds by Proposition~\ref{pn-sierp-s0}.
\end{xrem}

\section*{Acknowledgement}
The authors are very grateful to the referee for a number of helpful suggestions for improvement in the paper.

\bibliography{pn-sets}

\end{document}